\newenvironment{enumerate-roman}{\begin{enumerate}}{\end{enumerate}}
\newtheorem{thm}{Theorem}[section]
\newtheorem{lem}{Lemma}[section]
\newtheorem{prop}{Proposition}[section]
\newtheorem{cor}{Corollary}[section]
\newtheorem{ass}{Assumption}[section]
\newtheorem{pro}{Problem}[section]
\begin{document}

\title*{The Relationship between Maximum Principle and Dynamic Programming
Principle for Stochastic Recursive Control Problem with Random Coefficients\thanks{This is work was supported by National Key R\&D Program of China (No.2018YFA0703900), National Natural Science Foundation of China (Nos.11631004, 11871163).}}
\titlerunning{Stochastic Maximum Principle and Dynamic Programming Principle}
\author{Yuchao Dong \inst{1}\and Qingxin, Meng\inst{2} \and Qi, Zhang\inst{3}}
\authorrunning{Y. Dong, Q. Meng and Q. Zhang}
\institute{Institute of Operations Research and Analytics, National University of Singapore, Singapore, dyc19881021@icloud.com\and Department of Mathematics, Huzhou University, Zhejiang, China, mqx@zjhu.edu.cn\and The School of Mathematical Science, Fudan University, Shanghai 200433, China, qzh@fudan.edu.cn}

%\date{\today}

\maketitle

\begin{abstract}
This paper aims to explore the relationship between maximum principle and dynamic programming
principle for stochastic recursive control problem with random coefficients. Under certain regular conditions for the coefficients, the relationship between the Hamilton system with random coefficients and stochastic Hamilton-Jacobi-Bellman equation is obtained. It is very different from the deterministic coefficients case since stochastic Hamilton-Jacobi-Bellman equation is a backward stochastic partial differential equation with solution being a pair of random fields rather than a deterministic function. A linear quadratic recursive utility optimization problem  is given as an explicitly illustrated example based on this kind of relationship.
\end{abstract}
\section{Introduction}
As we all know, Pontryagin maximum principle (MP) and Bellman dynamic programming principle (DPP) serve as the most two important methods in  solving optimal control problems. Both of them aim to obtain some necessary conditions of optimal controls. Hence it is natural to think that they have some kind of relationship, although they have been developed separately and independently in literature to a great extent. In general, the MP gives a necessity condition of the optimal control by the Hamilton system which is a forward-backward equation consisting of the optimal state equation, the adjoint equation and optimality condition. On the other hand, the DPP characterizes the the optimal control by the Hamilton-Jacobi-Bellman (HJB) equation, to which the value function is a solution. Therefore, the relationship between Hamiltonian system and HJB equation can be thought as a relationship between MP and DPP.

For the deterministic control system, the Hamiltonian system is an ordinary differential equation and the HJB equation is a first-order partial differential equation (PDE), whose connection was first given by Pontryagin, Boltyanski, Gamkrelidze and Mischenko \cite{po-bo-ga-mi} in 1962. Since the value function $V$ is not always smooth, some nonsmooth versions of the relationship were studied by using nonsmooth analysis and generalized derivatives. For example, an attempt to relate these two without assuming the smoothness of the value function was done by Barron and Jensen \cite{Barron1986}, where the viscosity solution was used to derive the MP from the DPP. The relationship in deterministic case is known as 
$$\Psi_t=-V_x(t,\bar X_t)\ \ {\rm and}\ \ V_t(t,\bar X_t)=H(t,\bar X_t,\bar u_t,\Psi_t),$$  where $\bar u$ is the optimal control, $\bar X$ is the optimal state, $\Psi$ is the adjoint variable, $H$ is the Hamiltonian function, and $V$ is the value function, respectively. %(see Fleming and Rishel \cite {Fleming2012} for more details).
For the stochastic control system whose state equation is a stochastic differential equation (SDE) with deterministic coefficients, the Hamiltonian system is a forward-backward stochastic differential equation (FBSDE) with deterministic coefficients and the HJB equation a second-order fully nonlinear PDE, their connection was given by Bismut \cite{Bismut1978} and Bensoussan \cite{Bensoussan1982}. As for nonsmooth value function, Zhou \cite{Zhou1990, Zhou1991} obtained the relationship between them in the viscosity sense of HJB equation. The relationship in this case can be  summarized as
$$p_t=-V_{x}(t, \bar{x}_t),\ \ \ q_t=-V_{x x}(t, \bar{x}_t) \sigma(t, \bar{x}_t, \bar{u}_t),$$ and
$$V_{t}(t, \bar{x}_t)=G\left(t, \bar{x}_t, \bar{u}_t,-V_{x}(t, \bar{x}_t),-V_{x x}(t, \bar{x}_t)\right),$$ where $\sigma$ is the diffusion coefficient, $(p, q)$ is the adjoint pair and
$G$ is the generalized Hamiltonian function. % (see Yong and Zhou \cite{Yong1999} for more details).

However, when the state equation is a SDE with random coefficients, things are much different. Bear in mind that HJB equation in this case is a backward stochastic partial differential equation (BSPDE) with a pair of adapted solution, rather than a deterministic PDE with a deterministic solution. There should be also a relationship between MP and DPP, as well as between FBSDE with random coefficients and stochastic HJB equation, but no existing literature is concerned with this issue as far as we know.
%which is our main motivation to write this paper.

The relationship between MP and DPP not only demonstrates the connection between two main methods of control theory, but also plays a very important role in economic theory as pointed out in Yong and Zhou \cite{Yong1999}. Moreover, the relationship can be regarded as an extension of Feynman-Kac formula to fully nonlinear PDE, if one notices that the Hamiltonian system is a stochastic forward-backward system and HJB equation is a fully nonlinear PDE in a stochastic control system with deterministic coefficients. For the random coefficients settings,  Feynman-Kac formula is further extended to non-Markovian framework and fully nonlinear BSPDE. The reader can refer to \cite{du-zhang,hu-ma-yo,ma-yo,tang2005semi} for related studies.

The control system we consider to find the relationship between MP and DPP is the stochastic recursive control system with a general cost funtional, which is governed by the following controlled FBSDE:
\begin{equation*}%
\displaystyle\left\{
\begin{array}{lll}
dX_s&=&b(s,X_s,u_s\big)ds+\sigma\big(s,X_s,u_s)dW_s,\\
dY_s&=&-f(s,X_s,Y_s,Z_s, u_s)ds+Z_sdW_s,
\\X_0&=&x,
\\ Y_T&=&h(X_T),
\end{array}
\right.
\end{equation*}
and the following cost functional:
\begin{eqnarray*}
	J(0,x;u(\cdot))\triangleq Y^{0,x;u}_0.
\end{eqnarray*}
The above stochastic recursive control system was given by Peng \cite{Peng1992} to establish DPP in the Lipschitz setting of the generator and explore the connection between its value function and HJB equation. On the other hand,
Duffie and Epstein \cite{Duffie} studied such a control system from mathematical finance point of view, i.e. they put forward the stochastic (recursive) differential utility which can be regarded as the solution of FBSDE.

%Such kind of controlled system can be related to the portfolio selection problem with recursive utility, see Chen and Epstein \cite{Chen2002} for more details on this problem.
%For stochastic recursive optimal control problems,
From MP point of view, Peng \cite{Peng1993} also studied the above recursive control system and derived a local
MP by representing the adjoint
equation as a FBSDE, in which the control domain is convex. %The nonconvex control domain was studied by Xu \cite{Xu1995}, but the diffusion of the state equation in this paper does
%not depend on control.
For the general settings that the control domain is nonconvex and the diffusion depends on control, the Ekeland variational principle was applied to obtain the MP in Wu \cite{Wu2013} and Yong \cite{Yong10} by treating the second solution
and the terminal condition in backward stochastic differential equation (BSDE) as a control and a constraint, respectively. By introducing new and general first-order and second-order adjoint equations, Hu \cite{Hu2017} obtained the MP for the recursive stochastic optimal control problem without unknown parameters. These results, especially Hu \cite{Hu2017}, eventually solved the long-standing open problem put forward in Peng \cite{peng1998}.

% The DPP for stochastic recursive optimal control problems has been solved by Peng \cite{Peng1992}, and he proved that the value function is the viscosity solution to a generalized Hamilton-Jacobi-Bellman (HJB) equation. For %further research, readers are referred to Wu and Yu \cite{Wu2008}, Buckdahn and Nie \cite{Buckdahn2016}, and the references therein.

There has been results on the relationship between MP and DPP for stochastic recursive
optimal control system with deterministic coefficients. With sufficiently regular assumptions on the coefficients, Shi \cite{Shi2010} and Shi and Wu \cite{Shi2011} first demonstrated this relationship. Nie, Shi and Wu \cite{Nie2016,Nie2017} studied the relationship between MP and DPP in the sense of viscosity solution of HJB equation. The relationship is summarized as follows:
$$
\begin{array}{l}
\left\{\begin{aligned}
p^{*}_t=& V_{x}(t, \bar{x}_t)^{\top} q^{*}_t, \\
k^{*}_t=&\left[V_{x x}(t, \bar{x}_t) \sigma(t, \bar{x}_t, \bar{u}_t)+V_{x}(t, \bar{x}_t)\right.\\
&\left.\times f_{z}\left(t, \bar{x}_t,-V(t, \bar{x}_t),-V_{x}(t, \bar{x}_t) \sigma(t, \bar{x}_t, \bar{u}_t), \bar{u}_t\right)\right] q^{*}_t
\end{aligned}\right.
\end{array}
$$
and
$$V_{t}(t, \bar{x}_t)=G\left(t, \bar{x}_t,-V\left(t, \bar{x}_t,-V_{x}(t, \bar{x}_t),-V_{x x}(t, \bar{x}_t), \bar{u}_t\right),\right.$$
where $(p^*,q^*)$ is the adjoint pair of the forward part, $k^*$ is the adjoint process of the backward part in stochastic recursive control system and $G$ is the corresponding generalized Hamiltonian function.

In our paper, the most important feature is that the coefficients of the system we consider are random. We emphasize that this is an essential difference from existing literature. In 1992, Peng [43] studied the optimal control problem of non-Markovian stochastic systems using dynamic programming. Compared with the optimal control problem of Markov stochastic systems, the value function is no longer a deterministic function, but a random field. In other words, it is a family of semi-martingales. Furthermore, the HJB equation derived from Bellman's principle of optimality is no longer a second-order fully nonlinear PDE, but a second-order fully nonlinear BSPDE, whose solution is a pair of random fields as BSDE's. To distinguish it from the classical HJB equation, we call it the stochastic HJB equation.  As in the deterministic case, the existence of the solution for stochastic HJB equation is a very hard problem. The solvability has only been proved for a few cases, see \cite{tang2003general,tang2015dynamic,qiu2017,qiu2018,zhang2020backward} for instance. One contribution of our paper is to show that the value function of the recursive optimal control problem will be the classical solution of stochastic HJB equation, if the needed regularity is satisfied. It can be seen as a general form of Feyman-Kac representation. In this sense, our work extends the result of Tang \cite{tang2005semi}, in which the author used a forward-backward system to represent  semilinear backward stochastic partial  differential equation. In fact, our proof is partly inspired from that work, i.e. we also use the random field generated by the controlled SDE. Furthermore, we proved a verification theorem to show that the solution of stochastic HJB equation gives the optimal control.  Another contribution of our paper is to show the connection between the MP and the DPP. Our result extends those for stochastic recursive
optimal control system with deterministic coefficients. Note that we also assume that the value function is smooth to obtain the desired result, but how to deal with nonsmooth case is still unsolved. Actually, the solvability for the stochastic HJB equation in a general form is a long-existing open problem.

The rest of this article is organized as follows. In Section 2, we introduce some notations and the basic setup of our problem. We characterize the optimal control by DPP, i.e. the relation between the value function and stochastic HJB equation in Section 3. In Section 4, the optimal control is characterized by MP, i.e. the stochastic Hamiltonian system. In Section 5, we show the connection between the MP and the DPP.
As an application
we discuss a
linear quadratic (LQ) recursive utility portfolio optimization
problem with the random coefficients in Section 6, in which the
state feedback optimal control is obtained by both MP and DPP methods,
and the relations we obtained are demonstrated explicitly.

\section{Notations \& Statement of the problem}
Let $(\Omega,
\mathscr{F}, P)$ be a complete
probability space, and $\{W_t, 0\leq t\leq T\}$ is a one-dimensional standard
Brownian motion on it generating a right-continuous filtration $\{\mathscr{F}_t\}_{0\leq t\leq
T}$. Let $E$ be an Euclidean
space, and its inner product and norm are denoted by $(\cdot, \cdot)$ and
$|\cdot|$, respectively. For a function
$\phi:\mathbb R^n\longrightarrow \mathbb R$, we denote by $\phi_x$ its
gradient and by $\phi_{xx}$ its Hessian (a symmetric matrix). If $\phi:
\mathbb R^n\longrightarrow \mathbb R^k$ ($k\geq 2$),
$\phi_x=(\frac{\partial \phi_i}{\partial x_j})$ is the corresponding
$k\times n$ Jacobian matrix. %By $\mathscr{P}$ we denote the
%predictable $\sigma$-field on $\Omega\times [0, T]$ and by $\mathscr %
%B(\Lambda)$ the Borel $\sigma$-algebra of any topological space
%$\Lambda.$ In the rest of the paper, $C$ represents a generic constant, which
%can be different from line to line.

Next we introduce some useful spaces of random variables and stochastic
processes. For any $\alpha\in [1,\infty)$ and $\beta \in£¨(0,\infty)$, we let:

$\bullet$~~$M_{\mathscr{F}}^\beta(0,T;E)$: the space of all ${%
\mathscr{F}}_t$-adapted processes $f:\Omega\times[0,T]\rightarrow E$ satisfying $
\|f\|_{M_{\mathscr{F}}^\beta(0,T;E)}\triangleq{\left (\mathbb E\displaystyle%
\int_0^T|f_t|^ \beta dt\right)^{1\wedge\frac{1}{\beta}}}<\infty. $

$\bullet$~~$S_{\mathscr{F}}^\beta (0,T;E)$: the space of all ${%
\mathscr{F}}_t$-adapted c\`{a}dl\`{a}g processes $f:\Omega\times[0,T]\rightarrow E$ satisfying $
\|f\|_{S_{\mathscr{F}}^\beta(0,T;E)}\triangleq{\left (\mathbb
E\displaystyle\sup_{t\in [0,T]}|f_t|^\beta dt\right)^{1\wedge\frac
{1}{\beta}}}<+\infty. $

$\bullet$~~$L^\beta (\Omega;E)$: the space of all
random variables $\xi:\Omega\rightarrow E$
satisfying $ \|\xi\|_{L^\beta(\Omega;E)}\triangleq
\left(\mathbb E|\xi|^\beta\right)^{1\wedge\frac{1}{\beta}}<\infty$.

$\bullet$~~${\color{black}M_{\mathscr{F}}^\beta(L^\alpha ([0,T]; E))}$: the space of all ${%
\mathscr{F}}_t$-adapted processes $f:\Omega\times[0,T]\rightarrow E$
satisfying $
\|f\|_{\alpha,\beta}\triangleq{\left[\mathbb E\left(\displaystyle%
\int_0^T|f_t|^\alpha
dt\right)^{\frac{\beta}{\alpha}}\right]^{1\wedge\frac{1}{\beta}}}<\infty. $

%We denote $\mathbb M^p[0,T]\triangleq S_{\mathscr{F}}^p (0,T;\mathbb R^n)\times
%S_{\mathscr{F}}^p (0,T;\mathbb R)\times M_{\mathscr{F}}^p(0,T;L^2 (0,T; \mathbb R)).
%$ Clearly, $\mathbb M^p[0,T]$ is a Banach space. Any process in $\mathbb M^p[0,T]$
%is denoted by $\Theta(\cdot)=(x(\cdot),y(\cdot),z(\cdot)),$ whose
%norm is given by
%
%$$
%\|\Theta(\cdot)\|_{\mathbb M^p[0,T]}\triangleq\left\{\mathbf E\left[
%\sup_{t\in [0,T]}|x(t)|^p+\sup_{0\in
%[0,T]}|y(t)|^p+\left(\int_0^T|z(t)|^2dt\right)^{\frac{p}{2}}\right]\right\}^{\frac{1}{p}}.
%$$

For any $t,s \in [0,T]$ with  $t\leq s$, we define the admissible control set ${\cal
U}^{2}[t,s]=M_{\mathscr{F}}^{2}(L^{2}([t,s]; U))$ with $U$ being  a closed convex subset of $\mathbb R^k$. %{\color{red}In particular, denoted by ${\cal U}^\infty[t,T]$ the totality of all essentially bounded $U$-valued process.}
Given  $x\in\mathbb R^n$ and  $u\in {\cal U}^2[t,T]$, we consider the
following FBSDE
\begin{equation}\label{mz19}%
\displaystyle\left\{
\begin{array}{lll}
dX^{0,x;u}_s&=&b(s,X^{0,x;u}_s,u_s\big)ds+\sigma\big(s,X^{0,x;u}_s,u_s)dW_s,\\
dY^{0,x;u}_s&=&-f(s,X^{0,x;u}_s,Y^{0,x;u}_s,Z^{0,x;u}_s, u_s)ds+Z^{0,x;u}_sdW_s,
\\X^{0,x;u}_0&=&x,
\\ Y^{0,x;u}_T&=&h(X^{0,x;u}_T),
\end{array}
\right.
\end{equation}
with the cost functional
\begin{eqnarray*}
J(0,x;u)\triangleq Y^{0,x;u}_0,
\end{eqnarray*}
where $b: \Omega\times[0,T]\times \mathbb{R}^n\times U\rightarrow
\mathbb{R}^n,\ \sigma:\Omega\times[0,T]\times \mathbb{R}^n\times
U\rightarrow \mathbb{R}^{n},\ f:\Omega\times[0,T]\times
\mathbb{R}^n\times \mathbb{R}\times \mathbb{R}\times
U\longrightarrow \mathbb{R},\ h: \Omega\times
\mathbb{R}^n\longrightarrow \mathbb{R}$. % are joint measurable.

We need the following assumptions on coefficients $(b,\sigma, f, h)$.
\begin{ass}\label{assum_b}
	For any $(\omega,t,x, u)\in\Omega\times[0,T]\times\mathbb R^n\times U$, $ b(\cdot,x,u)$ and
	$\sigma(\cdot,x, u)$ are ${\mathscr F}_t$-adapted processes; $b(t,\cdot,u)$, $\sigma(t,\cdot,u) \in C^2(\mathbb R^n,\mathbb R^n)$; $b_{x}(t,x,u)$, $\sigma_{x}(t,x,u)$, $b_{u}(t,x,u)$, $\sigma_{u}(t,x,u)$ are continuous in $(x,u)$; there exists a constant $K$ such that
	$$|b(t,x,u)|,|\sigma(t,x,u)|\le K£¨(1+|x|+|u|)\ \ and\ \ |b_x|,|b_u|,|b_{xx}|,|\sigma_x|,|\sigma_u|,|\sigma_{xx}|\le K.$$
\end{ass}
\begin{ass}\label{assum_f}
For any $(\omega, t,x,x_1,x_2,y,z, u,u_1,u_2)\in\Omega\times[0,T]\times\mathbb R^n\times\mathbb R^n\times\mathbb R^n\times \mathbb R\times \mathbb R\times U\times U\times U$, $f(\cdot,x,y,z,u)$ is an ${\mathscr F}_t$-adapted process and $h(x)$ an $
\mathscr{F}_{T}$-measurable random variable; $f$ is
differentiable with respect to $(x,y,z,u)$ and  $h$ is
differentiable with respect to $x$; $f_x(t,x,y,z,u)$, $f_y(t,x,y,z,u)$, $f_z(t,x,y,z,u)$, $f_u(t,x,y,z,u)$ are continuous in $(x,y,z,u)$, $h_x(x)$ is continuous in $x$; there exists a constant $K$ such that for $\gamma \in [0,1)$
$$
|f(t,x,y,z,u)| \le K(1+|x|^2+|y|+|z|^{\gamma}+|u|^2),\ \ |h(x)|\le K(1+|x|^2),
$$
$$
|f_y|,|f_z| \le K,\ \ |f_x(t,x,y,z,u)| \le K(1+|x|+|u|)
$$
and
\begin{eqnarray*}
&&|h(x_1)-h(x_2)|+|f(t,x_1, y,z,u_1)-f(t,x_2, y,z, u_2)|\\
&\leq&K(1+|x_1|+|x_2|)(|x_1-x_2|)+K(1+|u_1|+|u_2|)|u_1-u_2|.
\end{eqnarray*}
\end{ass}
Under Assumption \ref{assum_b}, we can see that, for any given admissible control $u$,
the forward part of SDE (\ref{mz19}) admits a unique strong solution $X^u\in S_{\mathscr{F}}^2 (0,T;\mathbb R^n)$. Thus, we see that the terminal $h(X^u_T)$ is only $L^1$-integrable. Thanks to the sublinear growth of $f$ with respect to $z$ and Theorem 6.3 in \cite{briand2003lp}, there exists a unique solution $(Y^{u},Z^{u}) \in S_{\mathscr{F}}^\beta (0,T;\mathbb R)\times M_{\mathscr{F}}^\beta (0,T;\mathbb R)$  for any $\beta\in(0,1)$. % {\color{red}We call $\Theta^u=(X^u,Y^u,Z^u)$, or $\Theta=(X,Y,Z)$
%whenever its dependence on $u$ is clear from context,
%the state process corresponds to the control process $u$ and $(\Theta;u)$
%is the admissible pair}.  
It is easy to check that $| J ( 0,x; u  ) | < \infty$.
Then, we put forward the optimal control problem.

\begin{pro}
\label{pro:2.1} Find an admissible control $\bar{u}$ such
that
\begin{equation}  \label{eq:b7}
J(0,x;\bar{u})=\displaystyle\inf_{u\in {\cal
U}^{2}[t,T]}J(0,x;u).
\end{equation}
\end{pro}
Any $\bar{u}\in {\cal U}^{2}[0,T]$ satisfying \eqref{eq:b7} is
called an optimal control process of Problem \ref{pro:2.1}. With $\bar{u}$, the solution  $(\bar{X},\bar{Y}, \bar
{Z})$ of the state equation (\ref{mz19}) is called the optimal
state process, and consequently $(\bar{u};\bar{X}, \bar{Y}, \bar {Z})$ is called an optimal pair of Problem \ref{pro:2.1}.

\section{ The Dynamic Programming Principle and Stochastic HJB Equation for Stochastic Recursive Control Problem}
In this section, we are concerned with the dynamic programming principle and the corresponding
stochastic HJB Equation for stochastic recursive control Problem \ref{pro:2.1}. We shall show that, if the value function is a random field with some regularities, it will be the solution for the stochastic HJB equation.
To this end,
for $t\in
[0,T]$ and $\zeta\in L^2 (\Omega;\mathbb R^n)$ and $u\in {\cal U}^2[t,T]$, we consider the following parameterized FBSDE:
\begin{equation}\label{eq:3.1}%
\displaystyle\left\{
\begin{array}{lll}
dX^{\zeta,x;u}_s&=&b(s,X^{t,\zeta,;u}_s,u_s\big)ds+\sigma\big(t,X^{t,\zeta;u}_s,u_s)dW_s,\\
dY^{\zeta,x;u}_s&=&-f(s,X^{t,\zeta;u}_s,Y^{t,\zeta;u}_s,Z^{t,\zeta;u}_s, u_s)ds+Z^{t,\zeta;u}_sdW_s,
\\X^{t,\zeta;u}_t&=&\zeta,
\\ Y^{t,\zeta;u}_T&=&h(X^{t,\zeta;u}_T).
\end{array}
\right.
\end{equation}
{\color{black}Under Assumption \ref{assum_b} and \ref{assum_f},  by Theorem 6.3 in \cite{briand2003lp} again,
FBSDE (\ref{eq:3.1}) admits a unique strong solution $\Theta^{t,\zeta;u}=(X^{t,\zeta;u},Y^{t,\zeta;u},Z^{t,\zeta;u})\in S_{\mathscr{F}}^2 (t,T;\mathbb R^n)\times
S_{\mathscr{F}}^\beta (t,T;\mathbb R)\times M_{\mathscr{F}}^\beta(t,T; \mathbb R)$ for any $\beta \in(0,1)$. We call $\Theta^{t,\zeta;u}$, or $\Theta=(X,Y,Z)$
whenever its dependence on $u$ and $(t,\zeta)$ is clear from context,
the state process and $(u;\Theta)$
is the admissible pair}

For a given control process $u\in {\cal U}^2[t,T]$, we define the associated cost functional as follows.
\begin{eqnarray*}
J(t,x;u)\triangleq Y_{t}^{t,x;u},\ \ \  (t,x)\in [0,T]\times \mathbb R^n.
\end{eqnarray*}
From  Theorem A.2 in \cite{Peng1997}, we get the following relation
\begin{eqnarray}
    J(t,\zeta,u)=Y_{t}^{t,\zeta;u}.
\end{eqnarray}
For $\zeta=x\in \mathbb R^n, $ the value function we define in this part is
\begin{eqnarray*}
V(t,x)\triangleq\mathop{\text{essinf}}\limits_{u\in
{\cal U}^2[t,T]}J(t,x;u),\ \ \  (t,x)\in [0,T]\times \mathbb R^n.
\end{eqnarray*}

Now we discuss a generalized DPP for our stochastic optimal
control problem. For this purpose, %we have to adopt Peng's notion of stochastic backward semigroup, and
%to
we define the family of (backward) semigroups associated with FBSDE (\ref{eq:3.1}), which was first introduced by Peng  \cite{Peng1997}. Given the initial data $(t, x)$, a positive number $\delta \leq
T-t,$ an admissible control process $u \in {\cal U}^2[t, t+\delta]$
and a real-valued random variable $\eta \in L^2
(\Omega,\mathscr F_{t+\delta};\mathbb R),$ we put
\begin{equation}
  G_{s,t+\delta }^{t,x;u}(\eta): =\tilde Y_{s}^{t,x;u},\ \ \ s\in[t,
t+\delta]
\end{equation}
where  $(X^{t,x,u}_\cdot,\tilde Y^{t,x;u}_\cdot, \tilde
Z^{t,x;u}_\cdot)$ is the solution of the following FBSDE with the time
horizon $t+\delta,$

\begin{equation}\label{eq:3.2}%
\displaystyle\left\{
\begin{array}{lll}
dX^{t,x;u} _s&=&b(s,X^{t,x;u}_s,u_s\big)ds+\sigma\big(s,X^{t,x;u}_s,u_s)dW_s,\\
d\tilde Y^{t,x;u} _s&=&-f(s,X^{t,x;u}_s,\tilde Y^{t,x;u}_s,\tilde Z^{t,x;u}_s, u_s)ds+\tilde Z^{t,x;u}_sdW_s,
\\X^{t,x;u}_t&=&x,
\\Y^{t,x;u}_{t+\delta}&=&\eta.
\end{array}
\right.
\end{equation}
Obviously, for any admissible control pair
$(X^{t,x,u}, Y^{t,x;u},
Z^{t,x;u};u),$ we have
\begin{eqnarray}
G_{t,T}^{t,x;u}\big(h(X^{t,x;u}_T)\big) =G_{t,t+\delta }^{t,x;u}(
Y_{t+\delta}^{t,x;u}) =G_{t,t+\delta }^{t,x;u}(Y_{t+\delta
}^{t+\delta,X_{t+\delta }^{t,x;u};u}) =G_{t,t+\delta }^{t,x;u}(
J(t+\delta,X_{t+\delta }^{t,x;u};u)).
\end{eqnarray}
Moreover, the following dynamic programming principle holds by a similar proof as in \cite{Peng1997}.
\begin{thm}
  Under Assumption \ref{assum_b} and \ref{assum_f}, the value function $v(t,x)$
  obeys the following DPP: for any $0 \leq t<t+\delta\leq T,x\in \mathbb{R}^n$,
  \begin{eqnarray*}
V(t,x)=\inf\limits_{u\in
\mathcal{U}^2[t,t+\delta]}G_{t,t+\delta}^{t,x;u}\big(V(t+\delta ,X_{t+\delta
}^{t,x;u})\big).
\end{eqnarray*}
\end{thm}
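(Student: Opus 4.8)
The plan is to prove the asserted identity by establishing the two inequalities
$$
V(t,x) \geq \inf_{u\in\mathcal U^2[t,t+\delta]} G_{t,t+\delta}^{t,x;u}\big(V(t+\delta,X_{t+\delta}^{t,x;u})\big)
\quad\text{and}\quad
V(t,x) \leq \inf_{u\in\mathcal U^2[t,t+\delta]} G_{t,t+\delta}^{t,x;u}\big(V(t+\delta,X_{t+\delta}^{t,x;u})\big)
$$
separately. The three ingredients I expect to rely on are: the backward semigroup flow identity $G_{t,T}^{t,x;u}(h(X_T^{t,x;u}))=G_{t,t+\delta}^{t,x;u}(J(t+\delta,X_{t+\delta}^{t,x;u};u))$ recorded just above the theorem; the comparison theorem for BSDEs, which makes $\eta\mapsto G_{t,t+\delta}^{t,x;u}(\eta)$ monotone under Assumption \ref{assum_f} (here $|f_y|,|f_z|\le K$ guarantees the Lipschitz structure needed for comparison in the $L^\beta$, $\beta\in(0,1)$, theory of \cite{briand2003lp}); and the a priori stability estimate for the BSDE in its terminal datum, with constant depending only on $K$ and $\delta$.

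For the first inequality I would fix an arbitrary $u\in\mathcal U^2[t,T]$ and view its restriction to $[t+\delta,T]$ as an admissible control from the intermediate time. By the definition of $V$ as an essential infimum together with the flow property $J(t+\delta,X_{t+\delta}^{t,x;u};u)=Y_{t+\delta}^{t,x;u}$, we have $V(t+\delta,X_{t+\delta}^{t,x;u})\le Y_{t+\delta}^{t,x;u}$ almost surely. Applying the monotonicity of $G_{t,t+\delta}^{t,x;u}$ and then the semigroup flow identity gives
$$
G_{t,t+\delta}^{t,x;u}\big(V(t+\delta,X_{t+\delta}^{t,x;u})\big)
\le G_{t,t+\delta}^{t,x;u}\big(Y_{t+\delta}^{t,x;u}\big)
= Y_t^{t,x;u} = J(t,x;u).
$$
Since the left-hand side is bounded below by the infimum over $\mathcal U^2[t,t+\delta]$, taking the essential infimum over $u\in\mathcal U^2[t,T]$ on the right yields the claimed lower bound for $V(t,x)$.

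The reverse inequality is where the genuine difficulty lies. I would fix $u^1\in\mathcal U^2[t,t+\delta]$, set $\xi:=X_{t+\delta}^{t,x;u^1}\in L^2(\Omega,\mathscr F_{t+\delta};\mathbb R^n)$, and for each $\varepsilon>0$ produce a control $u^{2,\varepsilon}\in\mathcal U^2[t+\delta,T]$ with $J(t+\delta,\xi;u^{2,\varepsilon})\le V(t+\delta,\xi)+\varepsilon$ almost surely; concatenating $u:=u^1\mathbf 1_{[t,t+\delta)}+u^{2,\varepsilon}\mathbf 1_{[t+\delta,T]}\in\mathcal U^2[t,T]$ and combining the flow identity with monotonicity and the terminal-datum stability estimate gives
$$
V(t,x)\le J(t,x;u)
= G_{t,t+\delta}^{t,x;u^1}\big(J(t+\delta,\xi;u^{2,\varepsilon})\big)
\le G_{t,t+\delta}^{t,x;u^1}\big(V(t+\delta,\xi)\big)+C\varepsilon ,
$$
after which one takes the infimum over $u^1$ and lets $\varepsilon\downarrow0$. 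The main obstacle is the construction of the near-optimal $u^{2,\varepsilon}$: because the coefficients are random, $V(t+\delta,\cdot)$ is a random field and $\xi$ is a genuinely random (non-constant) initial state, so a single admissible control must be selected measurably and simultaneously across all realizations of $\xi$. I would handle this by the measurable-selection/partition technique of \cite{Peng1997}, approximating $\xi$ by finitely-valued $\mathscr F_{t+\delta}$-measurable random variables, choosing an $\varepsilon$-optimal control on each atom, and then using the BSDE stability estimate to control both the error from the piecewise-constant approximation of $\xi$ and the error from replacing $J(t+\delta,\xi;u^{2,\varepsilon})$ by $V(t+\delta,\xi)$; verifying that these approximations assemble into an admissible element of $\mathcal U^2[t+\delta,T]$ with uniform estimates is the delicate point.
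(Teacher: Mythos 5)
Your proposal is the standard two-sided argument via the backward semigroup, BSDE comparison, and the partition/measurable-selection technique of Peng (1997), which is exactly the route the paper intends --- it states the theorem and defers to ``a similar proof as in [Peng1997]'' without writing one out. The points you flag as delicate (substituting the random state $X_{t+\delta}^{t,x;u}$ into the random field $V(t+\delta,\cdot)$, and assembling $\varepsilon$-optimal controls measurably across a partition in the $L^\beta$, $\beta\in(0,1)$, framework of Briand et al.) are precisely the ones Peng's argument handles, so your outline matches the intended proof.
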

Next we shall show the relation between the value function and stochastic HJB equation.  For this purpose, the following lemma in \cite{tang2005semi} is needed.
\begin{lem}\label{lem_flow}
For any fixed admissible control $u$, set $\mathbb X_s^x$ to be the solution of the following SDE:
\begin{equation}\label{DMZ3}
\displaystyle\left\{
\begin{array}{lll}
dX_s=b(s, X_s,u_s)ds+\sigma(s,X_s,u_s)dW_s,\\
X_0=x.
\end{array}
\right.
\end{equation}
Then, almost surely, for each $s \in [0,T]$, $\mathbb X_s^\cdot$ is a diffeomorphism of $C^1$. The gradient $\partial\mathbb X_s^x$ satisfies the following SDE:
\begin{equation*}
\displaystyle\left\{
\begin{array}{lll}
d\partial \mathbb X_s^x =b_x(s,\mathbb X_s^x,u_s)\partial \mathbb X_s^x ds+\sigma_x(s,\mathbb X_s^x,u_s)\partial \mathbb X_s^x  dW_s,\\
\partial \mathbb X_0^x =I.
\end{array}
\right.
\end{equation*}
Moreover, from the boundedness of the derivatives, classical estimation for SDE yields that
$$
\mathbb E\left[ \sup_{s \in [0,T]} |\partial \mathbb X_s^x|^4 \right] \le M,
$$
where  $M$ is  a constant independent of $x$.
\end{lem}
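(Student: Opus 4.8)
The plan is to treat this as the classical theorem on stochastic flows of diffeomorphisms, in the spirit of Kunita and Bismut, specialised to the present coefficients. Since Assumption \ref{assum_b} guarantees that $b(s,\cdot,u)$ and $\sigma(s,\cdot,u)$ are $C^2$ with first and second spatial derivatives bounded by $K$ uniformly in $(\omega,s)$, all the ingredients needed for such a theorem are in place. I would organise the argument into four steps: (i) solve the candidate variational equation, (ii) derive the uniform fourth-moment bound, (iii) prove that the difference quotients of the flow converge to this candidate, giving $C^1$ differentiability, and (iv) invert the Jacobian to upgrade $C^1$ to a $C^1$ diffeomorphism.

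First, since $b_x$ and $\sigma_x$ are bounded, the matrix-valued linear SDE
\begin{equation*}
d\Phi_s = b_x(s,\mathbb X_s^x,u_s)\Phi_s\,ds+\sigma_x(s,\mathbb X_s^x,u_s)\Phi_s\,dW_s,\qquad \Phi_0=I,
\end{equation*}
admits a unique strong solution $\Phi^x\in S^p_{\mathscr F}(0,T;\mathbb R^{n\times n})$ for every $p\ge 1$; this is the candidate for $\partial\mathbb X^x$. The uniform moment bound is then the easiest step: applying It\^o's formula to $|\Phi_s|^4$, using $|b_x|,|\sigma_x|\le K$ together with the Burkholder--Davis--Gundy and Gronwall inequalities, one obtains $\mathbb E[\sup_{s\le T}|\Phi_s^x|^4]\le M$ with $M=M(K,T)$ depending only on the bounds, hence independent of $x$.

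To prove differentiability, fix a unit vector $e$ and set $\eta_s^{x,h}=h^{-1}(\mathbb X_s^{x+he}-\mathbb X_s^x)$. Writing the SDE satisfied by $\eta^{x,h}$ via the fundamental theorem of calculus, its drift and diffusion coefficients are the averaged gradients $\int_0^1 b_x(s,\mathbb X_s^x+\theta(\mathbb X_s^{x+he}-\mathbb X_s^x),u_s)\,d\theta$ acting on $\eta^{x,h}$, and likewise for $\sigma$. Comparing with the equation for $\Phi^x e$, using the continuity and boundedness of $b_x,\sigma_x$ and a standard $L^p$ Gronwall estimate, one shows $\eta^{x,h}\to\Phi^x e$ in $S^p_{\mathscr F}$ as $h\to0$. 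To pass from this convergence to an almost-sure statement valid simultaneously for every $s\in[0,T]$ — and to produce a version of $(x,s)\mapsto \mathbb X_s^x$ that is genuinely $C^1$ in $x$ — I would apply Kolmogorov's continuity criterion to the two-parameter fields $x\mapsto\mathbb X_\cdot^x$, $x\mapsto\Phi_\cdot^x$ and to the difference quotients, which requires $L^p$ bounds on their spatial increments; these bounds are exactly where the $C^2$ regularity with bounded derivatives in Assumption \ref{assum_b} is used.

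Finally, to obtain the diffeomorphism property I would invert the Jacobian. By It\^o's formula for the inverse of a matrix-valued semimartingale, $\Psi_s:=(\Phi_s^x)^{-1}$ solves
\begin{equation*}
d\Psi_s=\Psi_s\bigl(\sigma_x(s,\mathbb X_s^x,u_s)^2-b_x(s,\mathbb X_s^x,u_s)\bigr)ds-\Psi_s\,\sigma_x(s,\mathbb X_s^x,u_s)\,dW_s,\qquad \Psi_0=I,
\end{equation*}
which again has a unique solution with finite moments; in particular $\det\partial\mathbb X_s^x\neq0$ almost surely. Combined with the $C^1$ property and the surjectivity of the flow (itself a consequence of the existence of the backward flow), this yields that $\mathbb X_s^\cdot$ is a $C^1$ diffeomorphism for each $s$, almost surely. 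The hard part will not be any single estimate but precisely the upgrade in step (iii): turning the $L^p$ (in-probability) convergence of difference quotients, valid for each fixed $x$, into an almost-sure, jointly-continuous derivative field, so that both the differentiability and the diffeomorphism hold for all $(s,x)$ off a single null set. This is the classical content of Kunita's flow theorem, and since it is already recorded in \cite{tang2005semi} under the present hypotheses, I would ultimately invoke that reference for the delicate continuity-in-$x$ part while carrying out the moment estimate and the identification of the variational equation directly.
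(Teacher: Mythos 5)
Your proposal is correct and is exactly the classical Kunita/Bismut flow-of-diffeomorphisms argument that the paper itself relies on: the paper gives no proof of this lemma, simply quoting it from \cite{tang2005semi}, and your four steps (variational equation, uniform $L^4$ bound via It\^o--BDG--Gronwall, convergence of difference quotients upgraded by Kolmogorov's criterion, and inversion of the Jacobian) are the standard content behind that citation, with the hypotheses of Assumption \ref{assum_b} supplying precisely the bounded $C^2$ regularity needed. Nothing in your outline conflicts with the paper, and your closing decision to carry out the moment estimate and the identification of the variational equation directly while deferring the joint-continuity-in-$x$ upgrade to the cited reference matches how the paper uses the result.
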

Then main result of this section is presented below.
\begin{prop}\label{prop_verfication}
%	Assume that $f$  linearly depends on $z$, i.e.
%	$$
%	f(t,x,y,z,u)=f(t,x,y,u)+\beta_t z
%	$$
%	for some bounded process $\beta$.
In additional to Assumptions \ref{assum_b} and \ref{assum_f}, we also assume that the control region {\color{black}$U\subset\mathbb{R}^k$ is bounded and, for each $t\in[0,T]$ and $x\in\mathbb{R}^n$}, the infimum of the cost functional $J(t,x;\cdot)$ is attained by an optimal control $u^{*,t,x}$. %and satisfies
%	$$
%	\limsup_{s \rightarrow t} \mathbb E\left[ |u^{*,t,x}_s|^4 \right] <\infty.
%	$$
 Moreover,  assume that the value function $V(t,x)$ admits the following
 semimartingale decomposition:
\begin{eqnarray}\label{mz12}
V(t,x) =h(x)+\int_{t}^{T}\Gamma(s,x)ds-\int_{t}^{T}\Psi(s,x)dW_{s},\
\ \ t\in[0,T],
\end{eqnarray}
where  the  $\mathbb{R}$-valued function $\Gamma(t,\cdot)$ and
$\Psi(t,\cdot)$ are  $\mathscr F_t\times \mathcal B(\mathbb R^n)$ measurable for each $t\in[0,T]$ and
 $V,\Gamma,\Psi$ satisfy the following assumptions:
\begin{enumerate}[label=(\roman*)]
	\item $(t,x)\longmapsto V(t,x)  $ is continuous a.s.,
	\item $x\longmapsto V(t,x)$ is $C^2$ for each $t\in [0,T]$ a.s.,
	\item $x\longmapsto \Gamma(t,x)$ is continuous for each $t\in
	[0,T]$ a.s.,
	\item $x\longmapsto \Psi(t,x)$ is  is $C^1$  for each $t\in
	[0,T]$ a.s.,
	\item There exists $K \in {\color{black}M_{\mathscr{F}}^2(L^2(0,T;\mathbb R^+))}$ such that
	$$
	|V(t,x)|,\ |h(t,x)|,\ |\Gamma(t,x)|,\ |\Psi(t,x)| \le K_t(1+|x|^2),
	$$
	$$
	|\partial_x V(t,x)|,\ |\partial_x \Psi(t,x)| \le 	K_t(1+|x|),$$
	$$
	|\partial_{xx} V(t,x)| \le K_t,
	$$
	$$
	|\Gamma(t,x)-\Gamma(t,y)|\le K_t(1+|x|+|y|)|x-y|.
	$$
\end{enumerate}
Then, the value function $V$, together with $\Psi$,
constitutes a pair solution of the so-called backward HJB equation
\begin{numcases}{}\label{mz4}
dV(t,x)=-\inf_{u}G\big(t,x,V(t,x),\Psi(t,x),V_{x}(t,x),\Psi
_{x}(t,x),V_{xx}(t,x),u\big)dt+\Psi(t,x) dW_{t},\nonumber\\
V(T,x)=h(x),
\end{numcases}
where
\begin{eqnarray*}
	G(t,x,y,z,p,q,A,u)&=&\langle p,b(t,x,u)\rangle+\langle q,\sigma(t,x,u)\rangle +{1\over2}tr\big((\sigma\sigma^{*})(t,x,u)A\big)  \\
	&&+f(t,x,y,\sigma^*p+z,u).
\end{eqnarray*}
\end{prop}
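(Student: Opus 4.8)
The plan is to extract the finite-variation drift of $V$ from the decomposition \eqref{mz12} and identify it with the Hamiltonian infimum. From \eqref{mz12} the random field $V$ has the (forward) Itô differential $dV(t,x)=-\Gamma(t,x)\,dt+\Psi(t,x)\,dW_t$, and the terminal value is $V(T,x)=h(x)$. Hence the whole statement \eqref{mz4} reduces to proving, for a.e.\ $(t,x)$,
\[
\Gamma(t,x)=\inf_{u\in U}G\big(t,x,V(t,x),\Psi(t,x),V_x(t,x),\Psi_x(t,x),V_{xx}(t,x),u\big)\qquad\text{a.s.}
\]

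First I would fix $(t,x)$ and a constant control value $v\in U$, let $X^{t,x;v}$ solve the forward SDE, and set $Y^v_\cdot:=G^{t,x;v}_{\cdot,t+\delta}\big(V(t+\delta,X^{t,x;v}_{t+\delta})\big)$, i.e.\ the backward semigroup solving the BSDE on $[t,t+\delta]$ with generator $f(\cdot,v)$ and terminal value $V(t+\delta,X^{t,x;v}_{t+\delta})$. The DPP gives $V(t,x)\le Y^v_t$ a.s. Because $V$ is a genuine random field rather than a deterministic function, the main tool is the Itô--Kunita--Wentzell formula: composing \eqref{mz12} with the diffusion $X^{t,x;v}$ yields
\[
dV(s,X_s)=\Big[-\Gamma+\langle V_x,b\rangle+\tfrac12\,\mathrm{tr}(V_{xx}\sigma\sigma^{*})+\langle\Psi_x,\sigma\rangle\Big]ds+\big[\Psi+\langle V_x,\sigma\rangle\big]dW_s,
\]
all arguments being $(s,X_s,v)$; note the extra cross-variation term $\langle\Psi_x,\sigma\rangle$, absent in the deterministic-coefficient theory, produced by the martingale part $\Psi\,dW_s$ of the field (this is where assumption (iv) that $\Psi$ is $C^1$ enters).

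Next I would subtract: writing $Y^v_t-V(t,x)$ via the BSDE for the difference $Y^v_t-V(t+\delta,X_{t+\delta})$ and the displayed expansion for $V(t+\delta,X_{t+\delta})-V(t,x)$, the two $dW$-integrals combine, and after taking $\mathbb{E}[\,\cdot\,|\,\mathscr F_t]$ (both endpoints are $\mathscr F_t$-measurable and the stochastic integrals over $[t,t+\delta]$ are conditionally centered) one gets
\[
0\le Y^v_t-V(t,x)=\mathbb E\Big[\int_t^{t+\delta}\big(-\Gamma(s,X_s)+\langle V_x,b\rangle+\tfrac12\,\mathrm{tr}(V_{xx}\sigma\sigma^{*})+\langle\Psi_x,\sigma\rangle+f(s,X_s,Y^v_s,Z^v_s,v)\big)\,ds\;\Big|\;\mathscr F_t\Big].
\]
Dividing by $\delta$ and letting $\delta\downarrow0$, a Lebesgue-differentiation argument together with the continuity and growth hypotheses (i)--(v) and the moment bounds of Lemma \ref{lem_flow} should give the a.s.\ limit $-\Gamma(t,x)+G(t,x,V,\Psi,V_x,\Psi_x,V_{xx},v)\ge0$; as $v\in U$ is arbitrary this produces $\Gamma(t,x)\le\inf_u G$. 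The reverse inequality I would obtain from the standing assumption that the infimum of $J(t,x;\cdot)$ is attained: using the optimal control $u^{*,t,x}$ turns the DPP inequality into equality, so the conditional expectation above is exactly $0$; since its integrand is nonnegative by the lower bound just proved (extended to the random points $(s,X_s)$ via the continuity in (i)--(iv)), it must vanish a.e., and evaluating at the Lebesgue point $s=t$ forces $\Gamma(t,x)=G(t,x,\dots,u^{*,t,x}_t)\ge\inf_u G$, yielding equality.

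The hard part, on which I expect to spend the most effort, is justifying that the generator term passes to the limit correctly, namely that $f(s,X_s,Y^v_s,Z^v_s,v)\to f\big(t,x,V(t,x),\sigma^{*}V_x+\Psi,v\big)$ in the averaged sense above. This amounts to showing that the martingale integrand $Z^v_s$ of the short-horizon BSDE converges to $\Psi(s,X_s)+\langle V_x(s,X_s),\sigma\rangle$, exactly the $dW$-coefficient of $V(s,X_s)$ identified above, so that the $z$-slot of $f$ fills in as $\sigma^{*}p+z$ with $p=V_x,\ z=\Psi$, matching the definition of $G$. This is delicate in the present setting because the terminal data is only $L^1$ and $(Y^v,Z^v)$ lives in $S_{\mathscr F}^\beta\times M_{\mathscr F}^\beta$ with $\beta<1$, so the usual $L^2$ a priori BSDE estimates are not available. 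I would therefore compare $Y^v$ with $V(\cdot,X_\cdot)$ directly, use that they share the terminal value at $t+\delta$, and exploit the boundedness of $U$, the polynomial bounds in (v), and the quartic moment bound of Lemma \ref{lem_flow} to obtain the required convergence of the $Z$-term after a suitable localization; inserting $\Gamma=\inf_u G$ back into $dV=-\Gamma\,dt+\Psi\,dW$ with $V(T,x)=h(x)$ then gives \eqref{mz4}.
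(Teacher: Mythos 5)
Your overall strategy (DPP plus It\^o--Kunita--Wentzell applied to $V(s,X_s)$, comparison with the backward semigroup, and the optimal control for the reverse inequality) is the same as the paper's, but the step you yourself flag as ``the hard part'' is a genuine gap, and it is precisely where the paper's proof does something different. You propose to pass to the limit in the term $f(s,X_s,Y^v_s,Z^v_s,v)$ by proving that $Z^v$ converges to $\sigma^*V_x+\Psi$; as stated this is not a workable plan, because any a priori estimate on $Z^v-Z'$ (with $Z'_s=\sigma^*V_x(s,X_s)+\Psi(s,X_s)$) would have to be derived from an estimate on $Y^v-V(\cdot,X_\cdot)$, which is exactly the quantity you are trying to compute --- the argument is circular, and $Z^v$ has no pointwise meaning at $s=t$ in any case. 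The paper avoids this entirely by a linearization: writing $V(\cdot,X_\cdot)-Y$ as the solution of a linear BSDE with coefficients $A_s,B_s$ that are bounded thanks to the Lipschitz continuity of $f$ in $(y,z)$, one obtains the exact identity $V(t,X_t)-Y_t=\mathbb E\left[\int_t^{t+\delta}\xi_s\Delta(s,X_s,u_s)\,ds\,\big|\,\mathcal F_t\right]$, where $\xi$ is an explicit exponential satisfying $\mathbb E[|\xi_s-1|^2\,|\,\mathcal F_t]\le C(|t-s|+|t-s|^2)$; the unknown $Z^v$ is absorbed into the bounded coefficient $B_s$ and never needs to be identified. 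This linearization is the missing idea in your proposal.

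A second, smaller gap: your $\delta\downarrow0$ Lebesgue-differentiation step needs more care than you give it. No time-continuity is assumed on $\Gamma$ or $\Psi$, so $\frac1\delta\mathbb E[\int_t^{t+\delta}\phi_s\,ds\,|\,\mathscr F_t]\to\phi_t$ is a statement about Lebesgue points of a merely measurable adapted process under conditional expectation, valid only for a.e.\ $t$ with an exceptional set depending on $(x,v,\omega)$; the same issue recurs in the reverse inequality when you ``evaluate at the Lebesgue point $s=t$'' an identity that holds only for a.e.\ $s$ along the optimal trajectory. The paper sidesteps differentiation in $t$ altogether: it integrates the inequality against arbitrary nonnegative test processes to get $\Delta(s,X_s,u_s)\le0$ a.e., transfers this from the trajectory to all deterministic $x$ via the stochastic flow diffeomorphism and a Jacobian change of variables (Lemma \ref{lem_flow}), and recovers the equality at deterministic points through a mollification in time combined with $X^{*}_{s+u}\to x$ and the local Lipschitz bound on $\Gamma$ in assumption (v). Your plan can likely be repaired along these lines, but as written both the generator limit and the differentiation steps are unproven.
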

\begin{proof}
Let $\{ x_i\}=\mathbb Q$. For a fixed  admissible control $u$ and $x_i$,  we abbreviate $X$ for $X^{0,x_i;u}$ for simplicity. Applying It\^o-Ventzell formula to   $V(t,X_t)$, we have
\begin{eqnarray*}
	V(t,X_t)=V(t+\delta,X_{t+\delta})+\int_t^{t+\delta}&& \Gamma(s,X_s)-G(s,X_s,V(s,X_s),\Psi(s,X_s),V_x(s,X_s),\Psi_x(s,X_s),V_{xx}(s,X_s),u_s)\\
	&&+f(s,X_s,V(s,X_s),Z'_s,u_s)ds-\int_t^{t+\delta}Z'_s dW_s,
\end{eqnarray*}
where $Z'_s=\sigma^*V_x(s,X_s)+\Psi(s,X_s)$. From condition (v) in the theorem, it can be verified that
$Z' \in M_{\mathscr{F}}^{2}(0,T)$. Consider the following BSDE
\begin{eqnarray*}
	Y_r=V(t+\delta,X_{t+\delta})+\int_r^{t+\delta} f(s,X_s,Y_s,Z_s,u_s)ds-\int_r^{t+\delta}Z_sdW_s.
\end{eqnarray*}
From the DPP, we shall have that $Y_t \ge V(t,X_t)$. After linearization, $Y_t-V(t,X_t)$ can be written as
\begin{eqnarray}\label{DMZ5}
	V(t,X_t)-Y_t=\mathbb E\left[ \int_t^{t+\delta} \xi_s\Delta(s,X_s,u_s)ds \bigg |\mathcal F_t\right]\le 0,
\end{eqnarray}
where
$$
\Delta(s,x,u)=\Gamma(s,x)-G(s,x,V(s,x),\Psi(s,x),V_x(s,x),\Psi_x(s,x),V_{xx}(s,x),u)
$$
and $\xi_s$ satisfies the following SDE:
\begin{equation*}
\displaystyle\left\{
\begin{array}{lll}
d\xi_s=A_s \xi_s ds+B_s \xi_sdW_s,\\
\xi_t=1
\end{array}
\right.
\end{equation*}
with
$$
A_s=\frac{f(s,X_s,V(s,X_s),Z'_s,u_s)-f(s,X_s,Y_s,Z'_s,u_s)}{V(s,X_s)-Y_s},
$$
and
$$
B_s=\frac{f(s,X_s,Y_s,Z'_s,u_s)-f(s,X_s,Y_s,Z_s,u_s)}{Z'_s-Z_s}.
$$
Since $f(t,x,y,z,u)$ is Lipschitz continuous with respect to $y$ and $z$, it is easy to see that $A$ and $B$  are uniformly bounded processes. Then, the classical estimation for linear SDEs yields that
\begin{eqnarray}\label{DMZ1}
\mathbb E\left[ |\xi_s-1|^2 |\mathcal F_t \right] \le C\mathbb E \left[  \left(\int_t^s |A_s|ds\right)^2 +\int_t^s |B_s|^2ds  \bigg|\mathcal F_t \right] \le  C(|t-s|+|t-s|^2).
\end{eqnarray}
Here and throughout this paper, $C$ is a generic constant whose values may change from line by line.
To emphasize its dependence on $t$ and $\delta$, we also denote $\xi$ as $\xi^{t,\delta}$. Then, we claim that, for any $t$ and $\delta$,
\begin{eqnarray}\label{ineq_Q_expec}
	\mathbb E\left[ \int_t^{t+\delta}\Delta(s,X_s,u_s)ds \bigg|\mathcal F_t\right]\le 0,\ \ \ a.s.
\end{eqnarray}
To see this, for fixed $t$ and $\delta$,  we obtain similarly that, for any $n$ and $k\le n$,
\begin{equation}\label{esti_linear_SDE}
\mathbb E\left[ \int_{t+\frac{k}{n}\delta}^{t+\frac{k+1}{n}\delta} \xi^{t+\frac{k}{n}\delta,\frac{\delta}{n}}_s\Delta(s,X_s,u_s)ds\bigg |\mathcal F_t\right]\le 0.
\end{equation}
Then, from \eqref{esti_linear_SDE}, we have
\begin{equation*}
\begin{split}
&\mathbb E\left[  \int_{t+\frac{k}{n}\delta}^{t+\frac{k+1}{n}\delta}\Delta(s,X_s,u_s)ds\bigg |\mathcal F_t\right]\\
=& \mathbb E\left[ \int_{t+\frac{k}{n}\delta}^{t+\frac{k+1}{n}\delta} \xi ^{t+\frac{k}{n}\delta,\frac{\delta}{n}}_s\Delta(s,X_s,u_s)ds\bigg |\mathcal F_t\right]+\mathbb E\left[ \int_{t+\frac{k}{n}\delta}^{t+\frac{k+1}{n}\delta}(1- \xi ^{t+\frac{k}{n}\delta,\frac{\delta}{n}}_s)\Delta(s,X_s,u_s)ds\bigg| \mathcal F_t\right]\\
\le &\mathbb E\left[ \int_{t+\frac{k}{n}\delta}^{t+\frac{k+1}{n}\delta}(1-\xi ^{t+\frac{k}{n}\delta,\frac{\delta}{n}}_s)\Delta(s,X_s,u_s)ds\bigg|\mathcal F_t\right]\\
\le & \left( \mathbb E\left[  \int_{t+\frac{k}{n}\delta}^{t+\frac{k+1}{n}\delta}(1-\xi ^{t+\frac{k}{n}\delta,\frac{\delta}{n}}_s)^2ds  \bigg|\mathcal F_t\right] \right)^{1/2}\left( \mathbb E\left[   \int_{t+\frac{k}{n}\delta}^{t+\frac{k+1}{n}\delta}| \Delta(s,X_s,u_s)|^2ds\bigg|\mathcal F_t  \right] \right)^{1/2}
\end{split}
\end{equation*}
Summing over $k$, we have
\begin{equation}\label{DMZ2}
\begin{split}
&\mathbb E\left[  \int_{t}^{t+\delta}\Delta(s,X_s,u_s)ds\bigg |\mathcal F_t\right]\\
\le& \sum_{k=0}^{n-1} \left( \mathbb E\left[  \int_{t+\frac{k}{n}\delta}^{t+\frac{k+1}{n}\delta}(1-\xi ^{t+\frac{k}{n}\delta,\frac{\delta}{n}}_s)^2ds  \bigg|\mathcal F_t\right] \right)^{1/2}\left( \mathbb E\left[   \int_{t+\frac{k}{n}\delta}^{t+\frac{k+1}{n}\delta}| \Delta(s,X_s,u_s)|^2ds\bigg|\mathcal F_t  \right] \right)^{1/2}\\
\le &\left( \sum_{k=0}^{n-1} \mathbb E\left[  \int_{t+\frac{k}{n}\delta}^{t+\frac{k+1}{n}\delta}(1-\xi ^{t+\frac{k}{n}\delta,\frac{\delta}{n}}_s)^2ds  \bigg|\mathcal F_t\right] \right)^{1/2} {\color{black}\left( \mathbb E\left[   \int_{t}^{t+\delta}| \Delta(s,X_s,u_s)|^2ds\bigg|\mathcal F_t  \right] \right)^{1/2}},
\end{split}
\end{equation}
where the last inequality is obtained due to H\"older inequality. By \eqref{DMZ1}, we have
\begin{equation*}
\begin{split}
&\mathbb E\left[  \int_{t+\frac{k}{n}\delta}^{t+\frac{k+1}{n}\delta}(1-\xi ^{t+\frac{k}{n}\delta,\frac{\delta}{n}}_s)^2ds \bigg |\mathcal F_t\right]\\
=&\int_{t+\frac{k}{n}\delta}^{t+\frac{k+1}{n}\delta} \mathbb E\left[  (1-\xi ^{t+\frac{k}{n}\delta,\frac{\delta}{n}}_s)^2 \bigg |\mathcal F_t\right]ds\\
\le& C\int_{t+\frac{k}{n}\delta}^{t+\frac{k+1}{n}\delta} |t+\frac{k}{n}\delta-s|+|t+\frac{k}{n}\delta-s|^2 ds
\\
\le & C\frac{\delta^2}{n^2}.
\end{split}
\end{equation*}
Thus,
$$
\left( \sum_{k=0}^{n-1} \mathbb E\left[  \int_{t+\frac{k}{n}\delta}^{t+\frac{k+1}{n}\delta}(1-\xi ^{t+\frac{k}{n}\delta,\frac{\delta}{n}}_s)^2ds  \bigg|\mathcal F_t\right] \right)^{1/2} \longrightarrow 0,\ \ \  \text{as $n \rightarrow \infty$.}
$$
{\color{black}Due to Assumption \ref{assum_b} and boundedness of the control region $U$, $X\in S_{\mathscr{F}}^\beta(0,T;\mathbb{R}^n)$ for any $\beta\geq2$, and thus $\left( \mathbb E\left[   \int_{t}^{t+\delta}| \Delta(s,X_s,u_s)|^2ds\bigg|\mathcal F_t  \right] \right)^{1/2}$ is bounded. Hence, letting $n \rightarrow \infty$ in \eqref{DMZ2}, we have that
\begin{eqnarray}\label{DMZ4}
\mathbb E\left[  \int_{t}^{t+\delta}\Delta(s,X_s,u_s)ds \bigg|\mathcal F_t\right]\leq0.
\end{eqnarray}
For fixed $t\in[0,T]$ and any nonnegative $\mathbb{R}$-valued random variable $\eta\in\mathscr{F}_t$, it follows from (\ref{DMZ4}) that
\begin{eqnarray*}
\mathbb E\left[ \int_{0}^{T}\Delta(s,X_s,u_s)\eta I_{[t,t+\delta)}(s)ds \right]=\mathbb E\left[\eta\mathbb E\left[\int_{t}^{t+\delta}\Delta(s,X_s,u_s)ds \bigg|\mathcal F_t\right]\right]\leq0.
\end{eqnarray*}
Consequently, for any nonnegative simple progress $\phi\in M_{\mathscr{F}}^2(0,T;\mathbb{R})$,
\begin{eqnarray*}
\mathbb E\left[ \int_{0}^{T}\Delta(s,X_s,u_s)\phi_sds \right]\leq0.
\end{eqnarray*}
For any nonnegative progress $\psi\in M_{\mathscr{F}}^2(0,T;\mathbb{R})$, there exists a sequence of nonnegative simple progresses $\phi^n\in M_{\mathscr{F}}^2(0,T;\mathbb{R})$, $n\in\mathbb{N}$, such that
\begin{eqnarray*}
\lim_{n\to\infty}\mathbb E\left[ \int_{0}^{T}|\phi^n_s-\psi_s|^2ds \right]=0.
\end{eqnarray*}
Hence
\begin{eqnarray*}
&&\lim_{n\to\infty}\left|\mathbb E\left[ \int_{0}^{T}\Delta(s,X_s,u_s)\phi^n_sds \right]-\mathbb E\left[ \int_{0}^{T}\Delta(s,X_s,u_s)\psi_sds \right]\right|\\
&\leq&\lim_{n\to\infty}\left( \mathbb E\left[   \int_{0}^{T}|\Delta(s,X_s,u_s)|^2ds\right] \right)^{1/2}\left( \mathbb E\left[   \int_{0}^{T}|\phi^n_s-\psi_s|^2ds\right] \right)^{1/2}=0,
\end{eqnarray*}
which implies that
\begin{eqnarray*}
\mathbb E\left[ \int_{0}^{T}\Delta(s,X_s,u_s)\psi_sds \right]\leq0.
\end{eqnarray*}
Noticing the arbitrariness of nonnegative process $\psi$, we have that
\begin{eqnarray*}
{\color{black}\Delta(s,X_s,u_s) \le 0\ \ \ {\rm for\ a.e.}\ s\in[0,T],\ {\rm a.s.}}
\end{eqnarray*}}
Let $\mathbb X_s^x$ be the stochastic flow generated by the SDE \eqref{lem_flow}.
%$$
%dX_s=b(s,X_s,u_s)ds+\sigma(s,X_s,u_s)dW_s.
%$$
From Lemma \ref{lem_flow}, with probability $1$, for each $s$, $\mathbb  X_s^\cdot$ is a diffeomorphism of class $C^1$. For each $x_i$, we also have that
$$
\Delta(s,\mathbb X_s^{x_i},u_s) \le 0\ \ \ {\rm for\ a.e.}\ s\in[0,T],\ {\rm a.s.}
$$
%Thus, it holds that
%$$
%\text{ for each $x_i$, }\ \Delta(s,\mathbb X_s^{x_i},u_s) \le 0\ \ \ {\rm for\ a.e.}\ s\in[0,T],\ {\rm a.s.}
%$$
Since $\Delta(s,x)$ and ${\mathbb  X_s^{x}}$ is continuous with respect to $x$, we shall get that
$$
\Delta(s,\mathbb X_s^{x},u_s) \le 0\ \ \ {\rm for\ all}\ x\in\mathbb{R}^n,\ {\rm a.e.}\ s\in[0,T],\ {\rm a.s.}
$$
From the growth condition of the coefficients and the value function, we see that
$$
|\Delta(t,\mathbb X_t^{x},u_t)|^2 \le C(1+K^2_t)(1+|\mathbb X_t^x|^4).
$$
Then,
\begin{equation*}
\begin{split}
&\mathbb E\left[  \int_0^T |\Delta(t,\mathbb X_t^{x},u_t)|^2dt\right]\\
\le &C\mathbb E\left[  \int_0^T (1+K_t^2) (1+|\mathbb X_t^{x}|^4)dt\right]\\
\le & C\mathbb E\left[ \sup_t (1+|\mathbb X_t^{x}|^4) \int_0^T (1+K_t^2)dt\right]\\
\le & C\left( E\left[\left( \sup_t(1+|\mathbb X_t^{x}|^4)\right)^2 \right] \right)^{1/2}\left( E\left[\left(  \int_0^T (1+K_t^2)dt\right)^2\right]  \right)^{1/2}\\
\le& C(1+|x|^4).
\end{split}
\end{equation*}

Now, let $\varphi$ be a smooth function such that
$$
\varphi(x)=\left\{
\begin{split}
&1,\text{ for $|x|\le 1$;}\\
&0,\text{ for $|x| \ge 2$;}\\
&\in [0,1],\text{ otherwise}.
\end{split}
\right.
$$
\begin{color}{black}
For $s\in[0,T]$, define $\tilde{\mathbb X}_s^\cdot$ to be the inverse function of ${\mathbb X}_s^\cdot$ and consider a random function
$$
g(s,x)=\xi(\mathbb X_s^{x})\varphi(\frac{x}{N})|\det \partial\tilde{ \mathbb X}_s^y|_{y={\mathbb X}^x_s}|^{-1}p_s,
$$
where $N\in\mathbb{N}$, $p$ is an arbitrarily given bounded non-negative adapted process and $\xi$ is a smooth non-negative function with a compact support. Let us first prove
$\mathbb E\left[ \int_0^T \int_{\mathbb{R}^n} \Delta (s,\mathbb X_s^{x})g(s,x)dxds  \right]<\infty$. By H\"older inequality, it holds that
\begin{equation*}
\begin{split}
\mathbb E\left[ \int_0^T \int_{\mathbb{R}^n}|\Delta (s,\mathbb X_s^{x},u_s)g(s,x)|dxds  \right]\le &\left( \mathbb E\left[ \int_0^T \int_{\mathbb{R}^n}|\Delta(s,\mathbb X_s^{x},u_s) |^2 \varphi(\frac{x}{N})dxds  \right]  \right)^{1/2}\\
&\left( \mathbb E\left[ \int_0^T \int_{\mathbb{R}^n}\xi^2(\mathbb X_s^{x})\varphi(\frac{x}{N})|\det \partial_y \tilde{\mathbb X}_s^y|_{y= {\mathbb X}^x_s}|^{-2}p^2_s  dxds  \right]  \right)^{1/2}
\end{split}
\end{equation*}
For the first term on the right hand side, we have
$$
\mathbb E\left[ \int_0^T \int_{\mathbb{R}^n}|\Delta(s,\mathbb X_s^{x},u_s)|^2 \varphi(\frac{x}{N})dxds  \right]\le  \int_{|x|\le N+2} \mathbb E\left[  \int_0^T |\Delta(s,\mathbb X_s^{x},u_s)|^2ds\right]dx<\infty.
$$
Note that $\tilde{\mathbb X}_s^{\mathbb X_s^x}=x$. Hence
$
\partial_y\tilde{\mathbb X}_s^{y}|_{y=\mathbb X_s^x}\partial_x\mathbb X_s^x=I
$,
and thus $
{\color{black}|\det\partial_y\tilde{\mathbb X}_s^y|_{y= \mathbb X_s^{x}}|^{-1}=|\det\partial_x\mathbb X_s^{x}|}
$.
%$$
%(\partial \mathbb X)^{-1}\mathbb X^{-1}_s(x)=\partial \mathbb X^{-1}_s(x).
%$$
%Hence,
%$$
%|\det \partial \mathbb X^{-1}_s (x)|^{-1}=|\det (\partial \mathbb X)(\mathbb X_s^{-1}(s)) |
%$$
For the second term, it holds that
\begin{equation*}
\begin{split}
&\mathbb E\left[ \int_0^T \int_{\mathbb{R}^n}\xi^2(\mathbb X_s^{x})\varphi(\frac{x}{N})|\det\partial_y\tilde{\mathbb X}_s^y|_{y=\mathbb X^x_s}|^{-2}p^2_s  dxds  \right]\\
\le & C\mathbb E\left[ \int_0^T\int_{\mathbb{R}^n}\varphi(\frac{x}{N})|\det\partial_x\mathbb X_s^{x}|^{2}dxds  \right]\\
\le &C \int_{|x| \le N+2} \mathbb E\left[ \int_0^T |\det \partial_x\mathbb X_s^{x}|^2ds  \right]dx <\infty
\end{split}
\end{equation*}
Thus, we see that
$
\mathbb E\left[ \int_0^T \int_{\mathbb{R}^n} \Delta (s,\mathbb X_s^{x},u_s)g(s,x)dxds  \right]<\infty
$. Then we have
\begin{equation*}
\begin{split}
0 \ge &\mathbb E\left[ \int_0^T \int_{\mathbb{R}^n} \Delta (s,\mathbb X_s^{x},u_s)g(s,x)dxds  \right]\\
=&\mathbb E\left[ \int_0^T \int_{\mathbb{R}^n} \Delta (s,\mathbb X_s^{x},u_s)\xi(\mathbb X_s^{x})\varphi(\frac{x}{N})|\det \partial_y \tilde{\mathbb X}_s^y|_{y=\mathbb X^x_s}|^{-1}p_s dxds  \right]\\
=&{\color{black}\mathbb E\left[ \int_0^T \int_{\mathbb{R}^n} \Delta (s,x,u_s)\xi(x)\varphi(\frac{\tilde{\mathbb X}_s^x}{N})p_s dxds  \right],}
\end{split}
\end{equation*}
where we apply the change of variable %$y=\mathbb X^x_s$
from the second to the third line in the above.\end{color} As $N \rightarrow +\infty$, it reduces to
$$
\mathbb E\left[ \int_0^T \int_{\mathbb{R}^n} \Delta (s,x,u_s)\xi(x) p_s dxds  \right] \le 0.
$$
From the arbitrariness of $\xi$, $p$ and $u$, we have that
\begin{equation}\label{inf_less}
\sup_u \Delta (s,x,u) \le 0\ \ \ {\rm for\ all}\ x\in\mathbb{R}^n,\ {\rm a.e.}\ s\in[0,T],\ {\rm a.s.}
\end{equation}

Next, we show that the equality holds. Since the optimal control $u^{*,t,x}_\cdot$ and its corresponding state denoted by $X^{u^{*,t,x}}_\cdot$ exist. For simplicity, we abbreviate $(X^{u^{*,s,x}}_\cdot,u^{*,s,x}_\cdot)$ as $(X^*_\cdot,u^*_\cdot)$. It follows from (\ref{DMZ5}) that
$$
{\color{black}\Delta(s,X^{*}_s;u^{*}_s)=0,\text{ for a.e. $s \in[t,T]$, a.s..}}
$$
Denote by $\Delta(s,x):=\sup_u \Delta(s,x,u)$. Then, we see that
$$
\Delta(s,x,0) \le \Delta(s,x) \le 0.
$$
This implies that
$$
|\Delta(s,x)|\le |\Delta(s,x,0)|\le CK_t(1+|x|^2),
$$
which further yields that $\Delta(\cdot,x) \in {\color{black}M_{\mathscr{F}}^2(L^2(0,T;\mathbb R^-))}$ for any $x$. Let $\zeta(t)$ be a mollifier defined on $[0,+\infty)$, i.e.
$$\zeta(t)=\left \{\begin{aligned}C\exp(-\frac{1}{1-t^2}), &\text{ if $ t \le 1$;}\\ 0,\qquad &\text{ otherwise;}\end{aligned}
\right .$$
with the constant $C$ selected so that $\int_0^{\infty} \zeta(t)dt=1$ and $\zeta_n(t)=n\zeta(nt)$. Define
$$
\Delta_n(s,x)=\int_0^{\infty} \zeta_n(u)\Delta(s+u,x)du.
$$
We shall have that
\begin{equation}\label{ineq_delta}
\mathbb E\left[  \int_0^T \Delta_n(s,x)ds \right] \rightarrow \mathbb E\left[  \int_0^T \Delta(s,x)ds \right]
\end{equation}
as $n \rightarrow +\infty$. Note that
\begin{equation*}
\begin{split}
\Delta_n(s,x)=&\int_0^{\infty} \zeta_n(u)\Delta(s+u,x)du\\
 \ge& \int_0^{\infty} \zeta_n(u)\Delta(s+u,x,u^{*}_{s+u})du\\
=& \int_0^{\infty} \zeta_n(u)(\Delta(s+u,x,u^{*}_{s+u})-\Delta(s+u,X^{*}_{s+u},u^{*}_{s+u}))du
\end{split}
\end{equation*}
From the assumption of the theorem, we see that
$$
|\Delta(s+u,x,u^{*}_{s+u})-\Delta(s+u,X^{*}_{s+u},u^{*}_{s+u})|\le CK_{s+u}(1+|x|+|X^*_{s+u}|+|u^*_{s+u}|)|X^*_{s+u}-x|.
$$
Hence,
\begin{equation*}
\begin{split}
&\mathbb E\left| \int_0^{\infty} \zeta_n(u)(\Delta(s+u,x,u^{*}_{s+u})-\Delta(s+u,X^{*}_{s+u},u^{*}_{s+u}))du \right|\\
\le& C\left(\mathbb E\left[\int_0^{\infty} \zeta_n(u)  K_{s+u}(1+|x|+|X^*_{s+u}|+|u^*_{s+u}|)^2du \right]\right)^{1/2}\\
&\left( \mathbb E\left[   \int_0^{\infty} \zeta_n(u)  K_{s+u} |X^*_{s+u}-x|^2du\right] \right)^{1/2}\\
\le& C\left(\mathbb E\left[\int_0^{\infty} \zeta_n(u)  K^2_{s+u}du \right]\right)^{1/2} \left(\mathbb E\left[\int_0^{\infty} \zeta_n(u) (1+|x|+|X^*_{s+u}|+|u^*_{s+u}|)^4du \right]\right)^{1/4}\\
&\left( \mathbb E\left[   \int_0^{\infty} \zeta_n(u) |X^*_{s+u}-x|^4du\right] \right)^{1/4}
\end{split}
\end{equation*}
Then, we see that, for all  $s$,
$$
\mathbb E\left[   \int_0^{\infty} \zeta_n(u) |X^*_{s+u}-x|^4du\right] \rightarrow 0
$$
and
$$
E\left[\int_0^{\infty} \zeta_n(u) (1+|x|+|X^*_{s+u}|+|u^*_{s+u}|)^4du \right]
$$
is uniformly bounded with respect to $n$. Moreover, it holds that, for almost all $s$,
$$
\mathbb E\left[   \int_0^{\infty} \zeta_n(u)  K^2_{s+u}du\right] \rightarrow \mathbb E\left[  K_s^2\right].
$$
Hence, for almost $s$,
$$
\liminf_{n} \mathbb E\left[  \Delta_n(s,x) \right]\ge 0.
 $$
 From \eqref{ineq_delta}, we have
 $$
 \mathbb E\left[  \int_0^T \Delta(s,x)ds \right] \ge 0.
 $$
 Combining with the fact that $\Delta (s,x) \le 0$, we obtain that
 $$
 \Delta (s,x)=0.
 $$
\begin{flushright}
	\qed
\end{flushright}
\end{proof}
In above, we have proved that the value function is the solution of the stochastic HJB equation under suitable conditions. Next, we will prove a converse result.
\begin{prop}\label{prop_svt}[Stochastic Verification Theorem]
Let $(\Phi,\Psi)$ be the solution of stochastic HJB equation \eqref{mz4} and  assume that they satisfy the regularity assumptions in Proposition \ref{pro:2.1}. Then, for any $(t,x)$ and admissible control $u$, we have
$$
V(t,x) \le J(t,x;u).
$$
Moreover, if there exists an admissible control $ u$ such that, for almost all $s\in[t,T]$,
\begin{equation*}
\begin{split}
&G\big(s,X^{t,x;u}_s,V(t,X^{t,x; u}),\Psi(t,X^{t,x; u}),V_{x}(t,X^{t,x; u}),\Psi
_{x}(t,X^{t,x; u}),V_{xx}(t,X^{t,x; u}), u_s\big)\\
= &\inf_{v} G\big(s,X^{t,x;u}_s,V(t,X^{t,x; u}),\Psi(t,X^{t,x;u}),V_{x}(t,X^{t,x;u}),\Psi
_{x}(t,X^{t,x;u}),V_{xx}(t,X^{t,x;u}),v\big), a.e.,
\end{split}
\end{equation*}
then $ u$ is the optimal control.
\end{prop}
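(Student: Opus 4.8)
The plan is to run the classical verification argument, adapted to the random-field setting through the It\^o--Ventzell formula exactly as in the proof of Proposition~\ref{prop_verfication}. Fix $(t,x)$ and an arbitrary admissible control $u\in\mathcal U^2[t,T]$, and let $X=X^{t,x;u}$ be the associated forward state. Since $(V,\Psi)$ solves \eqref{mz4}, its spatial semimartingale decomposition \eqref{mz12} has drift $\Gamma(s,x)=\inf_v G(s,x,V,\Psi,V_x,\Psi_x,V_{xx},v)$ and martingale coefficient $\Psi$. First I would apply the It\^o--Ventzell formula to $V(s,X_s)$ on $[t,T]$; the regularity hypotheses (i)--(v) guarantee that all the resulting terms (the time drift $-\Gamma$, the gradient term $V_x\,dX$, the Hessian term, and the cross-variation $\Psi_x\sigma\,ds$) are well defined and that $Z'_s:=\sigma^*(s,X_s,u_s)V_x(s,X_s)+\Psi(s,X_s)$ lies in $M^2_{\mathscr F}(0,T)$, precisely as was verified before \eqref{DMZ5}.

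Collecting the drift terms and using the definition of $G$, the key observation is that the drift of $V(s,X_s)$ equals $-\Delta(s,X_s,u_s)-f(s,X_s,V(s,X_s),Z'_s,u_s)$, where $\Delta(s,x,u)=\Gamma(s,x)-G(s,x,V,\Psi,V_x,\Psi_x,V_{xx},u)=\inf_v G(s,x,\dots,v)-G(s,x,\dots,u)\le 0$ is the nonpositive Hamiltonian gap. Hence $\widehat Y_s:=V(s,X_s)$ solves the BSDE with terminal value $h(X_T)$ and driver $f(s,X_s,\cdot,\cdot,u_s)+\Delta(s,X_s,u_s)$, while $Y_s:=Y^{t,x;u}_s$ solves the same terminal problem with driver $f(s,X_s,\cdot,\cdot,u_s)$. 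Since $\Delta\le 0$, the first driver is dominated by the second, so the two backward equations are ordered.

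To make this rigorous under the low integrability, I would mirror the linearization leading to \eqref{DMZ5}: write $f(s,X_s,\widehat Y_s,Z'_s,u_s)-f(s,X_s,Y_s,Z_s,u_s)=A_s(\widehat Y_s-Y_s)+B_s(Z'_s-Z_s)$ with $A,B$ uniformly bounded (from $|f_y|,|f_z|\le K$), introduce the positive adjoint exponential $\xi$ solving $d\xi_s=A_s\xi_s\,ds+B_s\xi_s\,dW_s$, $\xi_t=1$, and obtain the representation
$$ V(t,x)-J(t,x;u)=\widehat Y_t-Y_t=\mathbb E\Big[\int_t^T \xi_s\,\Delta(s,X_s,u_s)\,ds\,\Big|\,\mathscr F_t\Big]\le 0, $$
because $\xi_s>0$ and $\Delta\le 0$. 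This gives $V(t,x)\le J(t,x;u)$ for every admissible $u$. For the optimality claim, suppose $u$ realizes the pointwise minimization, i.e. $G(s,X^{t,x;u}_s,\dots,u_s)=\inf_v G(s,X^{t,x;u}_s,\dots,v)$ for a.e.\ $s$, a.s.; then $\Delta(s,X_s,u_s)=0$ a.e., the representation forces $V(t,x)=J(t,x;u)$, and combined with the first part $V(t,x)\le\inf_{u'}J(t,x;u')$ we conclude $J(t,x;u)=V(t,x)=\inf_{u'}J(t,x;u')$, so $u$ is optimal.

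The main obstacle is the low integrability. Because $h$ has quadratic growth, $h(X_T)$ is only $L^1$ and $(Y,Z)\in S^\beta_{\mathscr F}\times M^\beta_{\mathscr F}$ for $\beta\in(0,1)$, whereas $(\widehat Y,Z')$ enjoys $M^2$-integrability from hypothesis (v); the two equations therefore live in different integrability classes. I would justify the linearization and the conditional-expectation representation in this $L^p$, $p<1$, regime by invoking the a priori estimates and comparison principle of \cite{briand2003lp} rather than the classical $L^2$ theory, and I would check, exactly as in the estimates \eqref{DMZ1}--\eqref{DMZ5}, that $\xi$ together with $\Delta(\cdot,X,u)$ is integrable enough to permit the interchange of expectation and time integral. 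Since those estimates have already been carried out in the proof of Proposition~\ref{prop_verfication}, they transfer here with only cosmetic changes.
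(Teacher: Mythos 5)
Your proposal is correct and follows exactly the route the paper indicates: the paper omits the details of this proof, stating only that one applies the It\^o formula to $V(s,X^{t,x;u}_s)$ and compares with $Y^{t,x;u}_s$ as in the proof of Proposition \ref{prop_verfication}, which is precisely the linearization-plus-adjoint-exponential argument you carry out. Your observation that here the weight $\xi_s>0$ can be kept (since one already knows $\Delta\le 0$ from the HJB equation), so the partition argument of the earlier proof is unnecessary, is a correct and welcome simplification.
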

\begin{proof}
The result is obtained by applying It\^o formula to $V(s,X^{t,x;u}_s)$ and comparing it with $Y_s^{t,x;u}$. Since the calculation is almost the same to previous proposition, we omit the proof here.	
\begin{flushright}
	\qed
\end{flushright}	
\end{proof}

\section{The Maximum Principle of  Stochastic Recursive Control Problem}

In this section, we derive the
 stochastic maximum principle of Problem \ref{pro:2.1}. We first  define the
Hamiltonian function $H: \Omega \times[0,T] \times \mathbb{R}^n\times \mathbb{R}\times \mathbb{R%
}\times \mathbb{R}^n\times \mathbb{R}^{n}\times \mathbb{R} \times U%
\rightarrow \mathbb{R}$ by
\begin{equation*}
\begin{array}{ll}
\displaystyle  H(t,x,y,z,p,q,k,u) = \langle p, b(t,x,u)\rangle
+\langle q, \sigma(t,x,u)\rangle-kf(t,x,y,z,u).
\end{array}
\label{eq:4.2}
\end{equation*}

To simplify our argument, we introduce some abbreviated
notations. Now, let $(\bar{u}; \bar{X},\bar{Y},\bar{Z})$  be an optimal pair of Problem
\ref{pro:2.1}. For $\varphi =b,\sigma, b_{x},b_u, 
\sigma_{x},\sigma_u,$ define
\begin{eqnarray*}
\bar{\varphi}(t):=\varphi(t, \bar{X}_t, \bar{u}_t),
\end{eqnarray*}
for $\varphi =f, f_x, f_y,f_z,f_u$,
\begin{eqnarray*}
\bar{\varphi}(t):=\varphi(t,\bar{X}_t, \bar {Y}_t, \bar {Z}_t,
\bar{u}_t),
\end{eqnarray*}
and for $h$,
\begin{equation*}
\bar h(T):=h(\bar X_T),\ \ \bar h_x(T):=h_x(\bar X_T).
\end{equation*}%

Now we are ready to give the necessary conditions of optimality for the optimal control of Problem \ref{pro:2.1}. Let $(\bar{u}; \bar{\Theta})=(\bar{u}; \bar{X},\bar{Y}%
,\bar{Z})$ be an optimal 4-tuple. Fix any admissible control $u \in {\cal U}^2[0,T]$. Consider $u^1 \in M_{\mathscr{F}}^\infty(0,T;\mathbb R^k)$ as $u^1_t=\frac{u_t-\bar u_t}{|u_t-\bar u_t| \vee 1} $. For any
 $\varepsilon \in [0,1],$ we construct a perturbed admissible control as below
 \begin{eqnarray*}
 	u^{\varepsilon }=\bar
 	u+\varepsilon u^1.
 \end{eqnarray*}%
It is easy to see that $u^\varepsilon$ is also an admissible control.
Denote by $\left(
X^{\varepsilon },Y^{\varepsilon },Z^{\varepsilon}\right)$  the corresponding state equation and consider the
following variational equations:
\begin{numcases}{}\label{eq:4.7}
dX^1_t=\bigg[\bar b_x(t)X^1_t+\bar
b_u(t){u}^1_t\bigg]dt+\displaystyle\bigg[\bar
\sigma_x(t)X^1_t +\bar \sigma_u(t){u}^1_t\bigg]dW_t,
\nonumber\\
dY^1(t)=-\bigg[\bar f_x(t)X^1_t+\bar f_y(t) Y^1_t+\bar
f_z(t)Z^1_t+\bar f_u(t){u}^1_t\bigg]dt
+\displaystyle Z^{1}_tdW_t,\nonumber\\
X^1_0=0,\nonumber\\ Y_T=\bar h_x(T)X^1_T.
\end{numcases}
Since $h_x$ is of linear growth with respect to $x$, the terminal $\bar h_x(T)X^1_T$ is not $L^2$-integrable in general. Thus, the solvability of \eqref{eq:4.7} is not obvious. For that purpose, we shall introduce the following result for BSDE with $L^p$-terminal. It has been proved in \cite{briand2003lp}.
\begin{lem}\label{lp_BSDE}
Consider the following BSDE
\begin{equation*}
\displaystyle\left\{
\begin{array}{lll}
dY_t=-f(t,Y_t,Z_t)dt+Z_t dW_t,\\
Y_T=\xi,
\end{array}
\right.
\end{equation*}
with $f$ is uniformly Lipschitz continuous with respect to $(y,z)$ and $\xi$ is $L^p$-integrable with some $p>1$. There exists a unique solution $(Y,Z)$, and for some constant $\tilde{C}$,
$$
\|Y\|^p_{\mathcal S^p}+\|Z\|^p_{M^p} \le \tilde{C}\mathbb E\left[ |\xi|^p+\left(\int_0^T |f(t,0,0)|dt\right)^p \right].
$$
\end{lem}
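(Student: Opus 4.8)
The plan is to follow the $L^p$ theory of Briand, Delyon, Hu, Pardoux and Stoica: first establish the a priori estimate, then deduce uniqueness from it, and finally obtain existence by approximation from the classical $L^2$ theory. The heart of the matter is the a priori estimate; everything else is a formal consequence.

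For the a priori estimate I would apply It\^o's formula to $|Y_t|^p$. Since $y\mapsto|y|^p$ is only $C^1$ (not $C^2$) at the origin when $1<p<2$, I would work with the regularized function $(|Y_t|^2+\varepsilon)^{p/2}$ and let $\varepsilon\downarrow0$ at the end. This produces an identity of the form
\begin{equation*}
|Y_t|^p + \frac{p(p-1)}{2}\int_t^T |Y_s|^{p-2}|Z_s|^2 \mathbf{1}_{\{Y_s\neq 0\}}\,ds = |\xi|^p + p\int_t^T |Y_s|^{p-2}Y_s f(s,Y_s,Z_s)\,ds - p\int_t^T |Y_s|^{p-2}Y_s Z_s\,dW_s.
\end{equation*}
Using the Lipschitz bound $|f(s,Y_s,Z_s)|\le |f(s,0,0)|+K(|Y_s|+|Z_s|)$, the drift generates a term $pK|Y_s|^{p-1}|Z_s|$, which I would split by Young's inequality so that a fraction of the $|Y_s|^{p-2}|Z_s|^2$ term is absorbed into the left-hand side (this is where $p>1$ enters, since $p(p-1)/2>0$), leaving a remainder controlled by $C|Y_s|^p$.

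Next I would take the supremum over $t$ and then expectations. The stochastic integral is handled by the Burkholder-Davis-Gundy inequality: its quadratic variation is bounded by $\sup_s|Y_s|^p\int_0^T|Y_s|^{p-2}|Z_s|^2\,ds$, and a further application of Young's inequality yields an absorbable $\varepsilon\,\mathbb E[\sup_s|Y_s|^p]$ together with a multiple of $\mathbb E[\int_0^T|Y_s|^{p-2}|Z_s|^2\,ds]$. The free term is bounded by $p\sup_s|Y_s|^{p-1}\int_0^T|f(s,0,0)|\,ds$ and then, via Young's inequality with conjugate exponents $p/(p-1)$ and $p$, produces the target $\big(\int_0^T|f(s,0,0)|\,ds\big)^p$ and another absorbable $\sup|Y|^p$ term. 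After absorbing and applying Gronwall's lemma to the $\int_t^T|Y_s|^p\,ds$ contribution, I obtain
\begin{equation*}
\mathbb E\Big[\sup_t|Y_t|^p\Big] + \mathbb E\Big[\int_0^T |Y_s|^{p-2}|Z_s|^2\,ds\Big] \le \tilde C\,\mathbb E\Big[|\xi|^p + \Big(\int_0^T|f(s,0,0)|\,ds\Big)^p\Big].
\end{equation*}
To pass from the weighted quantity to the genuine $M^p$-norm $\mathbb E[(\int_0^T|Z_s|^2\,ds)^{p/2}]$, I would use $\int_0^T|Z_s|^2\,ds\le(\sup_s|Y_s|)^{2-p}\int_0^T|Y_s|^{p-2}|Z_s|^2\,ds$ for $1<p<2$, raise to the power $p/2$, and apply Young's inequality a final time; the case $p\ge2$ is more direct since $|y|^p\in C^2$.

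With the estimate established, uniqueness follows by applying it to the difference of two solutions, whose generator is Lipschitz and whose free term vanishes. For existence I would truncate $\xi$ by $\xi_n=\xi\mathbf{1}_{\{|\xi|\le n\}}$ (and $f(\cdot,0,0)$ analogously) to reduce to $L^2$ data, invoke the classical Pardoux-Peng $L^2$ theory to solve the truncated equations, and use the a priori estimate applied to differences to show these solutions are Cauchy in $\mathcal S^p\times M^p$; passing to the limit gives a solution satisfying the stated bound. The main obstacle throughout is the bookkeeping in the regime $1<p<2$: justifying the regularized It\^o expansion and its $\varepsilon\downarrow0$ limit, controlling the indicator $\mathbf{1}_{\{Y_s\neq0\}}$ (for which one verifies $Z_s=0$ for a.e. $s$ on $\{Y_s=0\}$), and tuning all the Young-inequality constants so that every $\sup|Y|^p$ and $\int|Y|^{p-2}|Z|^2$ term appearing on the right is strictly absorbable.
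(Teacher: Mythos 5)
Your sketch is correct and is precisely the argument of Briand--Delyon--Hu--Pardoux--Stoica, which is exactly what the paper relies on: the paper gives no proof of this lemma, only the citation to that reference. All the key points are in place (the regularized It\^o expansion for $1<p<2$, absorption via Young and BDG, the fact that $Z=0$ a.e.\ on $\{Y=0\}$ to pass to the $M^p$-norm, and existence by truncation to $L^2$ data), so there is nothing to add.
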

We shall have the following lemmas.
\begin{lem} \label{lem:3.2}
	Under Assumptions \ref{assum_b}, it holds that
	\begin{eqnarray}\label{eq:4.9}
	\mathbb E\sup_{0\leq t\leq
		T}|X^\varepsilon_t-\bar{X}_t|^p=O(\varepsilon^p),
	\end{eqnarray}
	and
	\begin{eqnarray}\label{SDE_appro_X_2}
	\mathbb E\sup_{0\leq t\leq
		T}|X^\varepsilon_t-\bar{X}_t-\varepsilon X^1_t|^p=o(\varepsilon^p),
	\end{eqnarray}
for any $p>1$.	
\end{lem}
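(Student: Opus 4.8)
The plan is to prove both estimates by working solely with the forward equation, exploiting that $X^\varepsilon$, $\bar X$ and $X^1$ are all driven by SDEs whose first-order coefficients are bounded by $K$ under Assumption \ref{assum_b}. For the first estimate I set $\delta X^\varepsilon_t := X^\varepsilon_t-\bar X_t$, whose dynamics are
\[
d\delta X^\varepsilon_t = \bigl[b(t,X^\varepsilon_t,u^\varepsilon_t)-b(t,\bar X_t,\bar u_t)\bigr]dt + \bigl[\sigma(t,X^\varepsilon_t,u^\varepsilon_t)-\sigma(t,\bar X_t,\bar u_t)\bigr]dW_t.
\]
Since $|b_x|,|b_u|,|\sigma_x|,|\sigma_u|\le K$, the fundamental theorem of calculus gives the Lipschitz bound $|b(t,X^\varepsilon_t,u^\varepsilon_t)-b(t,\bar X_t,\bar u_t)|\le K(|\delta X^\varepsilon_t|+\varepsilon|u^1_t|)$, and likewise for $\sigma$. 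Because $u^1\in M_{\mathscr{F}}^\infty(0,T;\mathbb R^k)$ is essentially bounded, the inhomogeneous contribution is of order $\varepsilon$; applying the Burkholder--Davis--Gundy inequality followed by Gronwall's lemma to $\mathbb E\sup_{s\le t}|\delta X^\varepsilon_s|^p$ then yields $O(\varepsilon^p)$.

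Before the second estimate I record that $X^1\in S_{\mathscr{F}}^p(0,T;\mathbb R^n)$ for every $p>1$: its equation \eqref{eq:4.7} is linear with coefficients $\bar b_x,\bar\sigma_x$ bounded by $K$ and forcing terms $\bar b_u(t)u^1_t$, $\bar\sigma_u(t)u^1_t$ that are bounded, so the same BDG/Gronwall argument applies. For the second estimate I set $\rho^\varepsilon_t := X^\varepsilon_t-\bar X_t-\varepsilon X^1_t$ and Taylor-expand the increments of $b$ and $\sigma$ along the segment joining $(\bar X_t,\bar u_t)$ to $(X^\varepsilon_t,u^\varepsilon_t)$. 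Writing $b^\varepsilon_x(t):=\int_0^1 b_x(t,\bar X_t+\lambda\delta X^\varepsilon_t,\bar u_t+\lambda\varepsilon u^1_t)\,d\lambda$ and defining $b^\varepsilon_u,\sigma^\varepsilon_x,\sigma^\varepsilon_u$ analogously, and using $\delta X^\varepsilon_t=\varepsilon X^1_t+\rho^\varepsilon_t$ together with $u^\varepsilon_t-\bar u_t=\varepsilon u^1_t$, the equation for $\rho^\varepsilon$ takes the form
\[
d\rho^\varepsilon_t = \bigl[b^\varepsilon_x(t)\rho^\varepsilon_t+\varepsilon R^{1,\varepsilon}_t\bigr]dt + \bigl[\sigma^\varepsilon_x(t)\rho^\varepsilon_t+\varepsilon R^{2,\varepsilon}_t\bigr]dW_t,
\]
where $R^{1,\varepsilon}_t=(b^\varepsilon_x(t)-\bar b_x(t))X^1_t+(b^\varepsilon_u(t)-\bar b_u(t))u^1_t$ and $R^{2,\varepsilon}_t$ is its $\sigma$-analogue. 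Since $b^\varepsilon_x,\sigma^\varepsilon_x$ are bounded by $K$, this is again a linear SDE in $\rho^\varepsilon$, so BDG and Gronwall reduce the claim to showing that $\mathbb E\bigl[\bigl(\int_0^T(|R^{1,\varepsilon}_t|+|R^{2,\varepsilon}_t|)\,dt\bigr)^p\bigr]\to 0$ as $\varepsilon\to0$.

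The crux is this last convergence. From the first estimate $\delta X^\varepsilon\to 0$ in $S_{\mathscr{F}}^p$, so along a subsequence $\bar X_t+\lambda\delta X^\varepsilon_t\to\bar X_t$ and $\bar u_t+\lambda\varepsilon u^1_t\to\bar u_t$ for a.e. $(t,\omega)$, uniformly in $\lambda\in[0,1]$. The continuity of $b_x,b_u,\sigma_x,\sigma_u$ in $(x,u)$ then forces $b^\varepsilon_x(t)-\bar b_x(t)\to0$ and $b^\varepsilon_u(t)-\bar b_u(t)\to0$ pointwise, while the uniform bound by $2K$ furnishes a dominating function; since $X^1\in S_{\mathscr{F}}^p$ and $u^1$ is bounded, the dominated convergence theorem gives the required $o(1)$, whence $\mathbb E\sup_t|\rho^\varepsilon_t|^p=o(\varepsilon^p)$. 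The main obstacle is precisely this uniform control of the forcing term: one must interchange the limit with the $\lambda$-average defining $b^\varepsilon_x$ and manage the joint measurability and $p$-integrability of the path-dependent random derivatives, for which the boundedness of all first derivatives in Assumption \ref{assum_b} and the membership $X^1\in S_{\mathscr{F}}^p$ are essential.
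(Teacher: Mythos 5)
Your argument is correct and follows essentially the same route as the paper: an $L^p$ stability estimate (BDG plus Gronwall, which is exactly what the cited Mou--Yong proposition packages) gives \eqref{eq:4.9}, and for \eqref{SDE_appro_X_2} you use the same linearization of $\rho^\varepsilon=X^\varepsilon-\bar X-\varepsilon X^1$ with the $\lambda$-averaged derivatives $\tilde b_x,\tilde\sigma_x$ followed by a dominated-convergence (sub-subsequence) argument on the remainder. If anything your write-up is slightly more complete, since you retain the control-derivative remainders $(b^\varepsilon_u-\bar b_u)u^1$ that the paper's displayed equation for $\delta X$ omits, and you spell out the continuity/domination step that the paper dismisses as ``standard estimation.''
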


\begin{proof}
	The proof is rather standard.
	For (\ref{eq:4.9}), by the $L^p$ estimate for SDE (see Proposition 2.1 in \cite{mou2007variational}) and Assumptions \ref{assum_b}, we have
\begin{eqnarray*}
		\mathbb E\bigg(\sup_{0\leq t\leq T}|X^\varepsilon_t-\bar X_t|^p\bigg)
		&\leq & C\bigg[\mathbb E\bigg(\int_0^T|b(t,\bar X_t,u^\varepsilon_t)-b(t,\bar X_t, \bar u_t)
		|dt\bigg)^p
		\\&&\ \ \ \ \ \ +\mathbb E\bigg(\int_0^T|\sigma(t,\bar X_t,u^\varepsilon_t)
		-\sigma(t,\bar X_t,\bar u_t)|^2dt\bigg)^{p/2}\bigg]
		\\ &\leq & C\mathbb E\bigg(\int_0^T|u^\varepsilon_t-\bar u_t
		|^2dt\bigg)^{p/2}
		\\&=& C\mathbb E\bigg(\int_0^T|\varepsilon u^1_t|^2dt\bigg)^{p/2}
		\\&=& C\varepsilon^p\mathbb E\bigg(\int_0^T| u^1_t|^2dt\bigg)^{p/2}
		\\&=& O(\varepsilon^p).
	\end{eqnarray*}
	%Here and in the rest of this paper $C$ is a generic constant whose values could change line by line.
For \eqref{SDE_appro_X_2}, denote $\delta X:=X^{\varepsilon}-\bar X-\varepsilon X^1$. Then, we have
\begin{equation*}
\displaystyle\left\{
\begin{array}{lll}
d\delta X_t= \bar b_x(t) \delta X_t+(\tilde b_x(t)-\bar b_x(t))(X^{\varepsilon}_t-\bar X_t)dt+ \bar \sigma_x(t) \delta X_t+(\tilde \sigma_x(t)-\bar \sigma_x(t))(X^{\varepsilon}_t-\bar X_t)dW_t,\\
\delta X_0=0,
\end{array}
\right.
\end{equation*}
	with
	$$
	\tilde b_x(t):=\int_0^1 b_x(t,\bar X_t+\lambda (X^{\varepsilon}_t-\bar X_t),\bar u_t+\lambda \varepsilon u^1_t)d\lambda
	$$
	and
	$$
	\tilde \sigma_x(t):=\int_0^1 \sigma_x(t,\bar X_t+\lambda (X^{\varepsilon}_t-\bar X_t),\bar u_t+\lambda \varepsilon u^1_t)d\lambda.
	$$
	From previous estimation for $X^{\varepsilon}-\bar X$ and the standard estimation for SDEs, we shall have \eqref{SDE_appro_X_2}. The proof is completed.
	\begin{flushright}
		\qed
	\end{flushright}
\end{proof}
It is also easy to show that $X^1_T$ is $L^p$-integrable for any $p>1$, which implies that the terminal $\bar h_x(T)X^1_T$ is $L^p$-integrable for any $p \in (1,2)$. Combining Lemma \ref{lp_BSDE}, we shall have
\begin{lem}\label{lem_existence}
Under Assumptions \ref{assum_b} and \ref{assum_f}, FBSDE \eqref{eq:4.7} admits a unique solution $(X^1,Y^1,Z^1)$. Moreover, $X^1 \in S^{p_1}$ and $(Y^1,Z^1) \in S^{p_2} \times M^{p_2}$ for any $p_1>1$ and any $p_2 \in(1,2)$.
\end{lem}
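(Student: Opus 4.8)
The plan is to exploit the fact that the linear FBSDE \eqref{eq:4.7} is decoupled: the forward equation for $X^1$ does not involve $(Y^1,Z^1)$, so I would solve it first and then treat the backward part as a linear BSDE in which $X^1$ enters only as inhomogeneous data. For the forward equation, note that under Assumption \ref{assum_b} the coefficients $\bar b_x,\bar\sigma_x$ are bounded, and since $|u^1_t|\le 1$ by construction and $\bar b_u,\bar\sigma_u$ are bounded as well, the inhomogeneous terms $\bar b_u(t)u^1_t$ and $\bar\sigma_u(t)u^1_t$ are bounded processes. Hence $X^1$ solves a linear SDE with bounded coefficients and bounded forcing, and the standard $L^p$ estimate for SDEs (the same one invoked in Lemma \ref{lem:3.2}) gives existence, uniqueness, and $\mathbb E\sup_t|X^1_t|^{p_1}<\infty$ for every $p_1>1$; in particular $X^1\in S^{p_1}$ for all $p_1>1$ and $X^1_T\in L^{q}$ for every $q\ge 1$.

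Next I would record the integrability of the terminal datum and of the free term of the backward equation, which is where the restriction $p_2<2$ originates. Since $h_x$ has linear growth, $|\bar h_x(T)|\le K(1+|\bar X_T|)$, and because the optimal state satisfies only $\bar X\in S^2$ (the control domain here need not be bounded), we have $\bar h_x(T)\in L^2$; combining this with $X^1_T\in L^q$ through H\"older's inequality shows that $\xi:=\bar h_x(T)X^1_T\in L^{p_2}$ for every $p_2\in(1,2)$, by choosing $q$ with $1/p_2=1/2+1/q$. For the free term $g(t,0,0)=\bar f_x(t)X^1_t+\bar f_u(t)u^1_t$, I would use $|\bar f_x(t)|\le K(1+|\bar X_t|+|\bar u_t|)$ and the bound $|\bar f_u(t)|\le C(1+|\bar u_t|)$, the latter following from the Lipschitz-in-$u$ condition in Assumption \ref{assum_f} upon differentiation. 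Since $\int_0^T(1+|\bar X_t|+|\bar u_t|)\,dt\in L^2$ (using $\sup_t|\bar X_t|\in L^2$ and $\bar u\in\mathcal U^2$) and $\sup_t|X^1_t|\in L^q$, another application of H\"older gives $\int_0^T|g(t,0,0)|\,dt\in L^{p_2}$ for all $p_2\in(1,2)$.

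With these estimates in hand, I would view the backward equation in \eqref{eq:4.7} as the linear BSDE with generator $g(t,y,z)=\bar f_y(t)y+\bar f_z(t)z+\bar f_x(t)X^1_t+\bar f_u(t)u^1_t$, which is uniformly Lipschitz in $(y,z)$ because $|\bar f_y|,|\bar f_z|\le K$, and with terminal condition $\xi=\bar h_x(T)X^1_T$. Applying Lemma \ref{lp_BSDE} with $p=p_2\in(1,2)$ then yields a unique solution $(Y^1,Z^1)\in S^{p_2}\times M^{p_2}$ together with the corresponding a priori bound, which, combined with the well-posedness of the forward part, gives the unique solvability of the full system.

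The main obstacle is the borderline integrability: because $h_x$ grows linearly and the optimal state is only square-integrable, the terminal datum and the free term fail to be $L^2$ and lie only in $L^{p_2}$ for $p_2<2$. This is precisely why the classical $L^2$-theory of linear BSDEs is unavailable and the $L^p$-theory of \cite{briand2003lp} (Lemma \ref{lp_BSDE}) must be used instead; the real care is in verifying that every H\"older exponent closes up to give membership in $L^{p_2}$ for each $p_2\in(1,2)$, while no single choice of exponents reaches the endpoint $p_2=2$.
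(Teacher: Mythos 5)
Your proposal is correct and follows essentially the same route the paper takes: the paper likewise solves the decoupled linear forward equation to get $X^1_T\in L^p$ for all $p>1$, observes that the linear growth of $h_x$ together with $\bar X\in S^2$ forces the terminal datum $\bar h_x(T)X^1_T$ into $L^{p_2}$ only for $p_2\in(1,2)$, and then invokes Lemma~\ref{lp_BSDE}. Your write-up is in fact more complete than the paper's one-line justification, since you also verify the $L^{p_2}$-integrability of the free term $\bar f_x(t)X^1_t+\bar f_u(t)u^1_t$, which is needed to apply Lemma~\ref{lp_BSDE} but is not mentioned in the paper.
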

Next, we prove the following expansion for $\bar Y$.
\begin{lem} \label{lem:4.3}
	Under Assumptions \ref{assum_b} and \ref{assum_f}, we have for any $p\in(1,2)$
	\begin{eqnarray}\label{appro_Y}
	\lim_{\varepsilon\rightarrow 0}\mathbb E&\displaystyle \sup_{0\leq t\leq
		T}|\frac {Y^\varepsilon_t-\bar{Y}_t}{\varepsilon}-
	Y^1_t|^p=0.
	\end{eqnarray}
\end{lem}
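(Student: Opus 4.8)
The plan is to derive the linear backward SDE satisfied by the normalized remainder and then invoke the $L^p$ a priori estimate for BSDEs from Lemma \ref{lp_BSDE}. Set $\hat Y^\varepsilon := \frac{Y^\varepsilon-\bar Y}{\varepsilon}-Y^1$ and $\hat Z^\varepsilon := \frac{Z^\varepsilon-\bar Z}{\varepsilon}-Z^1$. Subtracting the equation for $\bar Y$ from that for $Y^\varepsilon$, dividing by $\varepsilon$, and writing the increment of $f$ in integral (mean value) form along the segment joining $(\bar X,\bar Y,\bar Z,\bar u)$ to $(X^\varepsilon,Y^\varepsilon,Z^\varepsilon,u^\varepsilon)$, I obtain a linear BSDE for $(\hat Y^\varepsilon,\hat Z^\varepsilon)$ whose generator is $\tilde f^\varepsilon_y\hat Y^\varepsilon_s+\tilde f^\varepsilon_z\hat Z^\varepsilon_s+I^\varepsilon_s$, where $\tilde f^\varepsilon_x,\tilde f^\varepsilon_y,\tilde f^\varepsilon_z,\tilde f^\varepsilon_u$ denote the $\lambda$-averaged partial derivatives of $f$ and
$$
I^\varepsilon_s=\tilde f^\varepsilon_x\Big(\tfrac{X^\varepsilon_s-\bar X_s}{\varepsilon}-X^1_s\Big)+(\tilde f^\varepsilon_x-\bar f_x)X^1_s+(\tilde f^\varepsilon_y-\bar f_y)Y^1_s+(\tilde f^\varepsilon_z-\bar f_z)Z^1_s+(\tilde f^\varepsilon_u-\bar f_u)u^1_s.
$$
The terminal value is $\hat Y^\varepsilon_T=\tilde h_x\big(\tfrac{X^\varepsilon_T-\bar X_T}{\varepsilon}-X^1_T\big)+(\tilde h_x-\bar h_x(T))X^1_T$, with $\tilde h_x$ the $\lambda$-averaged derivative of $h$.

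Since $|f_y|,|f_z|\le K$, the generator is Lipschitz in $(y,z)$ with bounded Lipschitz constants and free term $I^\varepsilon$, so Lemma \ref{lp_BSDE} gives, for each $p\in(1,2)$,
$$
\mathbb E\sup_{0\le t\le T}|\hat Y^\varepsilon_t|^p+\mathbb E\Big(\int_0^T|\hat Z^\varepsilon_s|^2ds\Big)^{p/2}\le \tilde C\,\mathbb E\Big[|\hat Y^\varepsilon_T|^p+\Big(\int_0^T|I^\varepsilon_s|ds\Big)^p\Big].
$$
It therefore suffices to prove that $\mathbb E|\hat Y^\varepsilon_T|^p\to0$ and $\mathbb E(\int_0^T|I^\varepsilon_s|ds)^p\to0$ as $\varepsilon\to0$.

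The principal difficulty, and the reason the statement is restricted to $p<2$, is that $\bar u$ lies only in $M_{\mathscr F}^2$, so $\bar X$, $\bar f_x$ and $\tilde h_x$ carry only a second moment, while the growth of $b,\sigma,f_x,h_x$ is linear. This deficiency is absorbed by the superlinear smallness in Lemma \ref{lem:3.2}. Concretely, for the leading terminal piece I bound $\tilde h_x$ by $C(1+|\bar X_T|+|X^\varepsilon_T-\bar X_T|)$ and apply H\"older's inequality with the conjugate exponents $\tfrac{2}{p}$ and $\tfrac{2}{2-p}$, placing the low-moment factor $1+|\bar X_T|$ (which is in $L^2$) on the exponent whose product with $p$ equals $2$, and the vanishing factor $\varepsilon^{-1}(X^\varepsilon_T-\bar X_T-\varepsilon X^1_T)$ on the complementary one; by \eqref{SDE_appro_X_2} the latter is $\varepsilon^{-1}\cdot o(\varepsilon)\to0$ in every $L^q$, $q>1$. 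The same trade-off disposes of the leading driver piece $\int_0^T\tilde f^\varepsilon_x(\varepsilon^{-1}(X^\varepsilon_s-\bar X_s)-X^1_s)ds$, after estimating $\int_0^T|\tilde f^\varepsilon_x|ds$ by $C\int_0^T(1+|\bar X_s|+|X^\varepsilon_s-\bar X_s|+|\bar u_s|)ds$, whose second moment is finite precisely because $\bar u\in M_{\mathscr F}^2$.

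For the remaining pieces of $\hat Y^\varepsilon_T$ and $I^\varepsilon$, namely those of the form (averaged coefficient minus its value at the optimal pair) times $X^1,Y^1,Z^1$ or $u^1$, I argue by continuity and dominated convergence. First, the $L^p$ stability of BSDEs, again Lemma \ref{lp_BSDE} applied to $Y^\varepsilon-\bar Y$, yields $(Y^\varepsilon,Z^\varepsilon)\to(\bar Y,\bar Z)$ in $S_{\mathscr F}^p\times M_{\mathscr F}^p$, so that along a subsequence $(X^\varepsilon,Y^\varepsilon,Z^\varepsilon,u^\varepsilon)\to(\bar X,\bar Y,\bar Z,\bar u)$ for a.e. $(s,\omega)$; the continuity of $f_x,f_y,f_z,f_u,h_x$ then forces $\tilde f^\varepsilon_\cdot\to\bar f_\cdot$ and $\tilde h_x\to\bar h_x(T)$ a.e. Combined with the integrability $X^1\in S^{p_1}$ and $(Y^1,Z^1)\in S^{p_2}\times M^{p_2}$ from Lemma \ref{lem_existence}, a further H\"older split (putting the bounded or $L^2$ coefficient difference on an exponent whose product with $p$ does not exceed $2$) together with a uniform-integrability/Vitali argument shows each such term tends to $0$ in $L^p$. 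Collecting the estimates gives $\mathbb E|\hat Y^\varepsilon_T|^p\to0$ and $\mathbb E(\int_0^T|I^\varepsilon_s|ds)^p\to0$, hence \eqref{appro_Y}. The main obstacle throughout is the bookkeeping of H\"older exponents forced by the mere second-order integrability of $\bar u$ and $\bar X$; the restriction $p\in(1,2)$ is exactly what leaves enough room to offset this against the $o(\varepsilon)$ estimate of Lemma \ref{lem:3.2}.
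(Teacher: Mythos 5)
Your proposal is correct and follows essentially the same route as the paper: write the linear BSDE satisfied by the remainder $Y^\varepsilon-\bar Y-\varepsilon Y^1$ (equivalently, its $\varepsilon$-normalization) with $\lambda$-averaged coefficients, and apply the $L^p$ estimate of Lemma \ref{lp_BSDE} together with Lemmas \ref{lem:3.2} and \ref{lem_existence} to show the terminal value and free term vanish. The paper states this conclusion in one line, whereas you supply the H\"older-exponent bookkeeping and the a.e.-convergence/uniform-integrability argument that justify it; this is a more detailed rendering of the same proof, not a different one.
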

\begin{proof}
A direct calculation gives
	\begin{equation*}
	\begin{split}
	&Y^\varepsilon_t-\bar{Y}_t-\varepsilon Y^1_t\\=&\tilde h_x(T)(X^{\varepsilon}_T-\bar X_T)+\bar h_x(T)\delta X(T)\\
	&
\int_t^T \tilde f_x(s)(X^\varepsilon_s-\bar{X}_s-\varepsilon X^1_s)ds
+\int_t^T \tilde f_y(s)(Y^\varepsilon_s-\bar{Y}_s-\varepsilon Y^1_s)ds
\\& +\int_t^T \tilde f_z(s)(Z^\varepsilon_s-\bar{Z}_s-\varepsilon Z^1_s)ds
+\int_t^T (\tilde f_x(s)-\bar f_x(s))\varepsilon X^1_sds
\\&+\int_t^T(\tilde f_y(s)-\bar f_y(s))\varepsilon Y^1_sds
+\int_t^T(\tilde f_z(s)- \bar f_z(s))\varepsilon Z^1_sds
\\&+\varepsilon\int_0^t (\tilde {f}_u(s)-\bar f_u(s))u^1_sds
+\int_t^T (Z^\varepsilon(s)-\bar{Z}(s)-\varepsilon Z^1_s)dW_s,
\end{split}
	\end{equation*}
	with
$$
	\tilde f_x(t):=\int_0^1 f_x(t,\bar X_t+\lambda (X^{\varepsilon}_t-\bar X_t),\bar Y(t)+\lambda (Y^{\varepsilon}_t-\bar Y_t),\bar Z_t+\lambda (Z^{\varepsilon}_t-\bar Z_t),\bar u_t+\lambda \varepsilon u^1_t)d\lambda,
$$
and $\tilde f_y,\tilde f_z$ and $\tilde h_x$ similarly defined. Combining Lemma \ref{lp_BSDE}  and Lemma \ref{lem_existence}, we have
\begin{eqnarray*}
\mathbb E&\displaystyle \sup_{0\leq t\leq
	T}|Y^\varepsilon_t-\bar{Y}_t-\varepsilon
Y^1_t|^p=o(\varepsilon^p),
\end{eqnarray*}
which is equivalent to \eqref{appro_Y}.
	\begin{flushright}
		\qed
	\end{flushright}
\end{proof}
Finally, we shall have the following maximum principle.
\begin{thm}\label{thm:4.5}
Under Assumptions \ref{assum_b} and \ref{assum_f},
set $(\bar u;\bar{\Theta})=( \bar u;\bar{X},\bar{Y},\bar{Z})$ be an optimal 4-tuple of Problem %
\ref{pro:2.1}.
 Then, we have, for a.e. $t \in [0,T]$, almost surely
\begin{equation}\label{eq:44}
 H_u(t,\bar{X}_t, \bar{Y}_t,\bar{Z}_t,p_t, q_t,k_t,\bar{u}_t)(u-\bar u_t)\ge 0, \text{for any $u \in U$,}
\end{equation}
where $\Lambda=(p,q,k)$ is the
solution to the following  FBSDE:
\begin{equation}\label{eq:4.6}
\left\{\begin{array}{ll} dp_t=-\bar H_x(t)dt+\displaystyle
q_tdW_t,\\
dk_t=-\bar H_y(t)dt -\displaystyle\bar
H_{z}(t)dW_t,
\\
p_T=-\bar h_x^*(T)k_T,\\
k_0=-1,~~~~0\leq t\leq T,
\end{array}\right.
\end{equation}
with $\eta(t)=H, H_{x}, H_{y}, H_{z},H_{u},$ defined as
$$\bar\eta(t):=\eta(t,\bar{\Theta}_t,\Lambda _t,\bar u_t).$$
\end{thm}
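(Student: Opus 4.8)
The plan is to perform a convex (first–order) perturbation of $\bar u$ and to dualize the variational system \eqref{eq:4.7} against the adjoint triple $(p,q,k)$ solving \eqref{eq:4.6}. First I would use optimality: since $u^\varepsilon=\bar u+\varepsilon u^1$ is admissible for every $\varepsilon\in[0,1]$ and $\bar u$ is optimal, $Y^\varepsilon_0=J(0,x;u^\varepsilon)\geq J(0,x;\bar u)=\bar Y_0$, whence $\tfrac{1}{\varepsilon}(Y^\varepsilon_0-\bar Y_0)\geq0$. Letting $\varepsilon\downarrow0$ and invoking the first–order expansion of Lemma \ref{lem:4.3}, this gives $Y^1_0\geq0$, where $(X^1,Y^1,Z^1)$ is the solution of the variational equation \eqref{eq:4.7} furnished by Lemma \ref{lem_existence}.

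The heart of the argument is a duality identity. I would apply It\^o's formula to $k_tY^1_t+\langle p_t,X^1_t\rangle$. Using the explicit forms $\bar H_x=\bar b_x^*p+\bar\sigma_x^*q-k\bar f_x$, $\bar H_y=-k\bar f_y$, $\bar H_z=-k\bar f_z$ and $\bar H_u=\bar b_u^*p+\bar\sigma_u^*q-k\bar f_u$, every term coupling $(X^1,Y^1,Z^1)$ with $(p,q,k)$ cancels in the drift, leaving precisely $\langle\bar H_u(t),u^1_t\rangle$. Integrating over $[0,T]$, taking expectation and substituting the boundary data $X^1_0=0$, $k_0=-1$, $Y^1_T=\bar h_x(T)X^1_T$ and $p_T=-\bar h_x^*(T)k_T$ (so that $k_TY^1_T+\langle p_T,X^1_T\rangle=0$), I obtain $Y^1_0=\mathbb E\int_0^T\langle\bar H_u(t),u^1_t\rangle\,dt$; combined with $Y^1_0\geq0$ this yields $\mathbb E\int_0^T\langle\bar H_u(t),u^1_t\rangle\,dt\geq0$ for every admissible $u$.

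To pass from this integrated inequality to the pointwise statement \eqref{eq:44}, I would localize. Fixing $v\in U$ and an arbitrary progressively measurable set $A\subseteq[0,T]\times\Omega$, the control $u=\bar u\,\mathbf 1_{A^c}+v\,\mathbf 1_A$ is admissible by convexity of $U$, and for it $u^1_t=\tfrac{v-\bar u_t}{|v-\bar u_t|\vee1}\mathbf 1_A(t)$. Plugging this into the previous inequality and using the arbitrariness of $A$ forces $\tfrac{1}{|v-\bar u_t|\vee1}\langle\bar H_u(t),v-\bar u_t\rangle\geq0$ for a.e. $(t,\omega)$, hence $\langle\bar H_u(t),v-\bar u_t\rangle\geq0$. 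Ranging $v$ over a countable dense subset of $U$ and exploiting the linearity (hence continuity) of the left-hand side in $v$ then gives \eqref{eq:44} for all $u\in U$, a.e. $t$, a.s.

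The main obstacle is the integrability bookkeeping in the duality step. Because $h_x$ grows only linearly, the terminal $\bar h_x(T)X^1_T$ is merely $L^p$-integrable for $p\in(1,2)$, so by Lemma \ref{lp_BSDE} one has $(Y^1,Z^1)\in S^{p_2}\times M^{p_2}$ with $p_2<2$, while the source $k\bar f_x$ (with $\bar f_x$ of linear growth and $\bar u$ only in $M^2$) forces $(p,q)\in S^{r}\times M^{r}$ with $r<2$; the forward process $k$, solving a linear SDE with bounded coefficients, remains in every $S^m$. Consequently the cross terms $\langle p,X^1\rangle$, $kY^1$ and the stochastic integrals produced by It\^o's formula are only marginally integrable, and I would verify by H\"older's inequality—taking $X^1\in S^{p_1}$ with $p_1$ conjugate to $r$, which is legitimate since $X^1\in S^{p_1}$ for every $p_1>1$—that the local-martingale parts are genuine martingales with vanishing expectation, if necessary after a standard stopping-time localization and a uniform-integrability passage to the limit. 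The solvability of the decoupled adjoint system \eqref{eq:4.6} ($k$ solved forward, then $(p,q)$ backward) should be confirmed within the same $L^p$ framework before the duality is carried out.
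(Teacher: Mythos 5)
Your proposal follows essentially the same route as the paper's proof: the convex perturbation $u^\varepsilon=\bar u+\varepsilon u^1$ with the truncated direction $u^1_t=\frac{u_t-\bar u_t}{|u_t-\bar u_t|\vee 1}$, the variational inequality $Y^1_0\ge 0$ via Lemma \ref{lem:4.3}, the duality identity from It\^o's formula applied to $k_tY^1_t+\langle p_t,X^1_t\rangle$, and the passage from the integrated inequality to the pointwise one by arbitrariness of the perturbation. Your explicit localization argument and the $L^p$ integrability bookkeeping for the adjoint pair are details the paper leaves implicit, but they do not constitute a different approach.
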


\begin{proof}
	Fix any admissible control $u \in {\cal U}^\infty[0,T]$. For any
	$\varepsilon \in [0,1],$ we construct a perturbed admissible control
	\begin{eqnarray*}
		u^{\varepsilon }=\bar
		u+\varepsilon u^1,
	\end{eqnarray*}
with $u^1_t=\frac{u_t-\bar u_t}{|u_t-\bar u_t|\vee 1} $ and the corresponding state equation is denoted by $\left(
	X^{\varepsilon },Y^{\varepsilon },Z^{\varepsilon}\right)$.  Let $(X^1,Y^1,Z^1)$ be the solution of FBSDE \eqref{eq:4.7}. From Lemma \ref{lem:4.3}, we have for any $p \in(0,1)$,
	\begin{eqnarray}
	\lim_{\varepsilon\longrightarrow 0}\mathbb E&\displaystyle \left[\sup_{0\leq t\leq
		T}\left|\frac {Y^\varepsilon_t-\bar{Y}_t}{\varepsilon}-
	Y^1_t\right|^p\right]=0.
	\end{eqnarray}
	Then, it holds that
	\begin{equation}\label{eq:4.4}
	Y^1_0=\lim_{\varepsilon\rightarrow
		0^+}\frac{Y^\varepsilon_0-\bar Y_0}{\varepsilon}=\lim_{\varepsilon\rightarrow
		0^+}\frac{J(0,x,u^\varepsilon)-J(0,x,\bar{u})}{\varepsilon}\ge 0.
\end{equation}
Applying It\^{o} formula to
	$\langle Y^1_t, k_t \rangle+ \langle X^1_t, p_t \rangle,$ we
	have
	$$
	\begin{array}{ll}
	Y^1_0=&E\displaystyle\int_0^TH_u(t,\bar{X}_t,\bar{Y}_t,\bar{Z}_t,\bar{u}_t,p_t,q_t,k_t)
	u^1_tdt.
	\end{array}
	$$
	Thus, by the variational inequality (\ref{eq:4.4}), we have
	$$
	E\displaystyle\int_0^TH_u(t,\bar{X}_t,\bar{Y}_t,\bar{Z}_t,\bar{u}_t,p_t,q_t,k_t)u^1_tdt\geq
	0,
	$$
	which is equivalent to 
	$$
	E\displaystyle\int_0^TH_u(t,\bar{X}_t,\bar{Y}_t,\bar{Z}_t,\bar{u}_t,p_t,q_t,k_t)\frac{u_t-\bar u_t}{|u_t-\bar u_t|\vee 1}dt\geq
	0,
	$$
	for any $u \in U^2[0,T]$.
	%So for any $u\in U$, we have
	%$$
	%H_u(t,t,\bar{y}(t-),\bar{q}(t),\bar{z}(t),\bar{u}(t),k(t-))
	%(u-\bar{u}(t))\geq 0\ \ a.e.\ a.s.
	%$$
   Due to the arbitrariness of $u1$, we shall get that 
   $$
   H_u(t,\bar{X}_t,\bar{Y}_t,\bar{Z}_t,\bar{u}_t,p_t,q_t,k_t)\frac{u-\bar u_t}{|u-\bar u_t|\vee 1} \ge 0,
   $$
for any $u \in U$. This will implies \eqref{eq:44}.
\begin{flushright}
		\qed
\end{flushright}
\end{proof}

\section{The Relationship between SMP and DPP}
In this section, we will state the relation between SMP and DPP for the recursive utility setup.
\begin{thm}\label{mz27}
%Let the assumptions of Proposition \ref{prop_verfication} hold.
We assume that the value function admits the following form
\begin{eqnarray}\label{mz121}
V(t,x) =h(x)+\int_{t}^{T}\Gamma(s,x)ds-\int_{t}^{T}\Psi(s,x)dW_{s},\ \ \ t\in[0,T],
\end{eqnarray}
where for a.e. $s\in[t,T]$ a.s. $\omega\in\Omega$,
\begin{eqnarray}\label{mz11}
\Gamma(s,\bar{X}_s^{t,x})&=&G\big(s,\bar{X}_s^{t,x},\bar{u}_s,V(s,\bar{X}_s^{t,x}),\Psi(s,\bar{X}_s^{t,x}),V_x(s,\bar{X}_s^{t,x}),\Psi_x(s,\bar{X}_s^{t,x}),V_{xx}(s,\bar{X}_s^{t,x})\big)\\
&=&\inf_{u\in U}G\big(s,\bar{X}_s^{t,x},u,V(s,\bar{X}_s^{t,x}),\Psi(s,\bar{X}_s^{t,x}),V_x(s,\bar{X}_s^{t,x}),\Psi_x(s,\bar{X}_s^{t,x}),V_{xx}(s,\bar{X}_s^{t,x})\big).\nonumber
\end{eqnarray}
If $V\in C^{1,3}([0,T]\times\mathbb{R}^n)$ and $\Gamma_{x},\Psi_x\in C^{0,0}([0,T]\times\mathbb{R}^n)$, we have
\begin{eqnarray}\label{mz13}
p_s&=&-V_x(s,\bar{X}_s^{t,x})k_s,\nonumber\\
q_s&=&-\bigg[V_{xx}(s,\bar{X}_s^{t,x})\sigma(s,\bar{X}_s^{t,x},\bar{u}_s)\\
&&\ \ \ \ +V_{x}(s,\bar{X}_s^{t,x})f_z\big(s,\bar{X}_s^{t,x},V(s,\bar{X}_s^{t,x}),\sigma^*V_x(s,\bar{X}_s^{t,x})+\Psi(s,\bar{X}_s^{t,x}),\bar{u}_s\big)+\Psi_x(s,\bar{X}_s^{t,x})\bigg]k_s,\nonumber
\end{eqnarray}
for a.e. $s\in[0,T]$ a.s., where $k_s$ satisfies $k_0=-1$ and
\begin{equation}\label{SMP&DPP_k}
\begin{split}
dk_s= & f_y(s,\bar X_s^{t,x},\bar u_s,V(s,\bar X_s^{t,x}),\sigma(s,\bar X_s^{t,x})V_x(s,\bar X_s^{t,x})+\Psi(s,\bar X_s^{t,x}))k_sds\\
& f_z(s,\bar X_s^{t,x},\bar u_s,V(s,\bar X_s^{t,x}),\sigma(s,\bar X_s^{t,x})V_x(s,\bar X_s^{t,x})+\Psi(s,\bar X_s^{t,x}))k_sdW_s, \text{ for  $s\in [t,T]$.}
\end{split}
\end{equation}
\end{thm}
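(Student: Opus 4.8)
The plan is to verify that the pair $\big({-}V_x(s,\bar X_s^{t,x})k_s,\,\hat q_s\big)$, with $\hat q_s$ the claimed expression in \eqref{mz13}, solves exactly the backward part of the adjoint system \eqref{eq:4.6} driven by the \emph{same} $k$; since that linear backward equation has a unique solution, the identities in \eqref{mz13} follow. First I would record the dynamics of the random field $V_x$: differentiating the decomposition \eqref{mz121} in $x$ (legitimate since $V\in C^{1,3}$ and $\Gamma_x,\Psi_x$ are continuous) yields $d_s V_x(s,x)=-\Gamma_x(s,x)\,ds+\Psi_x(s,x)\,dW_s$ with terminal value $V_x(T,x)=h_x(x)$. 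I would then apply the It\^o--Wentzell (Kunita) formula to the composition $V_x(s,\bar X_s)$ along the optimal state $d\bar X_s=\bar b(s)\,ds+\bar\sigma(s)\,dW_s$. This produces a drift containing $-\Gamma_x$, the transport term $V_{xx}\bar b$, the It\^o correction $\tfrac12 V_{xxx}\bar\sigma^2$, and the cross-variation term $\Psi_{xx}\bar\sigma$, together with diffusion coefficient $\Psi_x+V_{xx}\bar\sigma$.

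The algebraic heart is the rewriting of $\Gamma_x(s,\bar X_s)$. Along the optimal trajectory the infimum in \eqref{mz11} is attained at $\bar u_s$, so by the envelope theorem (Danskin) $\Gamma_x$ equals the total $x$-derivative of $G\big(s,x,\bar u_s,V,\Psi,V_x,\Psi_x,V_{xx}\big)$ taken at the frozen minimizer, with no contribution from differentiating the feedback control. Using the explicit form of $G$ (so that $G_p=b+\sigma f_z$, $G_q=\sigma$, $G_A=\tfrac12\sigma\sigma^*$, $G_y=f_y$, $G_z=f_z$) and the chain rule through $V,\Psi,V_x,\Psi_x,V_{xx}$, the terms $\tfrac12\sigma\sigma^*V_{xxx}$ and $\sigma\Psi_{xx}$ appearing in $\Gamma_x$ cancel exactly the It\^o correction $\tfrac12 V_{xxx}\bar\sigma^2$ and the cross term $\Psi_{xx}\bar\sigma$ from It\^o--Wentzell, reducing the drift of $V_x(s,\bar X_s)$ to first- and second-order data only. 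Forming $d\big(V_x(s,\bar X_s)k_s\big)$ by the product rule and reading off the martingale part gives precisely $\hat q_s=-\big[V_{xx}\sigma+V_xf_z+\Psi_x\big]k_s$, while a direct comparison shows the drift of $-V_x(s,\bar X_s)k_s$ equals $-\bar H_x(s)$ once $(p,q)$ is replaced by $(-V_xk,\hat q)$; the terminal value is $-h_x(\bar X_T)k_T=p_T$. Uniqueness of the linear backward equation then closes the identification, and the equation for $k_s$ itself is immediate from $\bar H_y=-kf_y$, $\bar H_z=-kf_z$ in \eqref{eq:4.6}.

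I expect the main obstacle to be twofold, both technical rather than structural. The first is justifying the It\^o--Wentzell formula for $V_x(s,\bar X_s)$: the cross-variation term needs a spatial derivative of the martingale coefficient $\Psi_x$, which is only assumed continuous, so rather than differentiating $V_x$ directly one would apply It\^o--Wentzell to $V(s,\mathbb X_s^{x})$ (which needs only $\Psi_x$) along the $C^1$ stochastic flow $\mathbb X_s^{x}$ of Lemma \ref{lem_flow}, and then differentiate in the initial datum $x$, using that $\mathbb X_s^{\cdot}$ is a diffeomorphism to transfer the $x$-derivative; this is the device borrowed from \cite{tang2005semi}. The second is the envelope step: since $U$ is merely convex, $\bar u_s$ may lie on the boundary, so the minimality must be combined with the assumed differentiability of $\Gamma$ to guarantee $\Gamma_x=G_x\big|_{u=\bar u_s}$. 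Finally, because $f_x$ has only linear growth and $k$ is merely $L^p$ for $p<2$, the uniqueness/identification step should be carried out in the $L^p$ framework of Lemma \ref{lp_BSDE}, exactly as in Section~4.
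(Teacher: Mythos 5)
Your proposal follows essentially the same route as the paper: the key step is the first-order condition in $x$ for the nonnegative function $x\mapsto G\big(s,x,\bar u_s,V(s,x),\Psi(s,x),V_x(s,x),\Psi_x(s,x),V_{xx}(s,x)\big)-\Gamma(s,x)$, which vanishes at $x=\bar X_s^{t,x}$ (your envelope/Danskin step — note this needs no interiority of $\bar u_s$ in $U$, since the minimization here is over $x\in\mathbb{R}^n$), followed by It\^o--Wentzell applied to $-V_x(s,\bar X_s^{t,x})k_s$ and identification with the adjoint FBSDE \eqref{eq:4.6} by uniqueness, exactly as in the paper. The technical caveats you flag are well taken — in particular the paper's own computation writes $\Psi_{xx}$ in \eqref{mz16} although only $\Psi_x\in C^{0,0}$ is assumed, and these second-derivative terms do cancel against the It\^o--Wentzell cross-variation terms exactly as you predict — but they do not alter the structure of the argument.
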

\begin{proof} %The first part of the theorem has been proved in the proof of Proposition \ref{prop_verfication}. We need only to prove (\ref{mz13}).
First note that there exists a unique solution of \eqref{SMP&DPP_k}, since $f$ is Lipschitz continuous with respect to $y$ and $z$.
Noticing the first equality in (\ref{mz11}), we know
\begin{eqnarray*}
G\big(s,\bar{X}_s^{t,x},\bar{u}_s,V(s,\bar{X}_s^{t,x}),\Psi(s,\bar{X}_s^{t,x}),V_x(s,\bar{X}_s^{t,x}),\Psi_x(s,\bar{X}_s^{t,x}),V_{xx}(s,\bar{X}_s^{t,x})\big)-\Gamma(s,\bar{X}_s^{t,x})=0
\end{eqnarray*}
Then, since $V$ satisfies HJB equation (\ref{mz4}) and has a form as (\ref{mz12}), we conclude
\begin{eqnarray*}
\Gamma(s,x)&=&\inf_{u\in U}G\big(s,x,u,V(s,x),\Psi(s,x),V_x(s,x),\Psi_x(s,x),V_{xx}(s,x)\big)\nonumber\\
&\leq&G\big(s,x,\bar{u}_s,V(s,x),\Psi(s,x),V_x(s,x),\Psi_x(s,x),V_{xx}(s,x)\big).
\end{eqnarray*}
Thus
\begin{eqnarray*}
0&=&G\big(s,\bar{X}_s^{t,x},\bar{u}_s,V(s,\bar{X}_s^{t,x}),\Psi(s,\bar{X}_s^{t,x}),V_x(s,\bar{X}_s^{t,x}),\Psi_x(s,\bar{X}_s^{t,x}),V_{xx}(s,\bar{X}_s^{t,x})\big)-\Gamma(s,\bar{X}_s^{t,x})\nonumber\\
&\leq&G\big(s,x,\bar{u}_s,V(s,x),\Psi(s,x),V_x(s,x),\Psi_x(s,x),V_{xx}(s,x)\big)-\Gamma(s,x).
\end{eqnarray*}
Bearing in mind that $V\in C^{1,3}([0,T]\times\mathbb{R}^n)$ and $\Gamma_{x}\in C^{0,0}([0,T]\times\mathbb{R}^n)$, we have
\begin{eqnarray*}
{{\partial}\over{\partial x}}\Big\{G\big(s,x,\bar{u}_s,V(s,x),\Psi(s,x),V_x(s,x),\Psi_x(s,x),V_{xx}(s,x)\big)-\Gamma(s,x)\Big\}_{x=\bar{X}_s^{t,x}}=0.
\end{eqnarray*}
This implies
\begin{eqnarray}\label{mz16}
&&(\sigma_x)^*(s,\bar{X}_s^{t,x},\bar{u}_s)\big(V_{xx}(s,\bar{X}_s^{t,x})\sigma(s,\bar{X}_s^{t,x},\bar{u}_s)\big)+{1\over2}tr\big((\sigma\sigma^*)(s,\bar{X}_s^{t,x},\bar{u}_s)V_{xxx}(s,\bar{X}_s^{t,x})\big)\nonumber\\
&&+b^*_x(s,\bar{X}_s^{t,x},\bar{u}_s)V_x(s,\bar{X}_s^{t,x})+V_{xx}(s,\bar{X}_s^{t,x})b(s,\bar{X}_s^{t,x},\bar{u}_s)
+\sigma^*_x(s,\bar{X}_s^{t,x},\bar{u}_s)\Psi_x(s,\bar{X}_s^{t,x})\nonumber\\
&&+\Psi_{xx}(s,\bar{X}_s^{t,x})\sigma(s,\bar{X}_s^{t,x},\bar{u}_s)+f_x\big(s,\bar{X}_s^{t,x},V(s,\bar{X}_s^{t,x}),\Psi(s,\bar{X}_s^{t,x})+\sigma^*V_x(s,\bar{X}_s^{t,x}),\bar{u}_s\big)\nonumber\\
&&+f_y\big(s,\bar{X}_s^{t,x},V(s,\bar{X}_s^{t,x}),\Psi(s,\bar{X}_s^{t,x})+\sigma^*V_x(s,\bar{X}_s^{t,x}),\bar{u}_s\big)V_x(s,\bar{X}_s^{t,x})\\
&&+f_z\big(s,\bar{X}_s^{t,x},V(s,\bar{X}_s^{t,x}),\Psi(s,\bar{X}_s^{t,x})+\sigma^*V_x(s,\bar{X}_s^{t,x}),\bar{u}_s\big)\Psi_{x}(s,\bar{X}_s^{t,x})\nonumber\\
&&+f_z\big(s,\bar{X}_s^{t,x},V(s,\bar{X}_s^{t,x}),\Psi(s,\bar{X}_s^{t,x})+\sigma^*V_x(s,\bar{X}_s^{t,x}),\bar{u}_s\big)\sigma^*_x(s,\bar{X}_s^{t,x},\bar{u}_s)V_{x}(s,\bar{X}_s^{t,x})\nonumber\\
&&+f_z\big(s,\bar{X}_s^{t,x},V(s,\bar{X}_s^{t,x}),\Psi(s,\bar{X}_s^{t,x})+\sigma^*V_x(s,\bar{X}_s^{t,x}),\bar{u}_s\big)\sigma^*(s,\bar{X}_s^{t,x},\bar{u}_s)V_{xx}(s,\bar{X}_s^{t,x})-\Gamma_x(s,\bar{X}_s^{t,x})=0.\nonumber
\end{eqnarray}
Here and in the rest of this paper, $$\displaystyle{1\over2}tr\big((\sigma\sigma^*)V_{xxx}\big)\triangleq\bigg(tr\big(\sigma\sigma^*(V_x)^1_{xx}\big),
tr\big(\sigma\sigma^*(V_x)^2_{xx}\big),\cdots,tr\big(\sigma\sigma^*(V_x)^n_{xx}\big)\bigg)^*.$$
On the other hand, from (\ref{mz121}), we have
\begin{eqnarray*}
V_x(t,x)=h_x(x)+\int_t^T\Gamma_x(s,x)ds-\int_t^T\Psi_x(s,x)dW_s,\ \ \ t\in[0,T].
\end{eqnarray*}
%Bear in mind the standard relationship between FBSDE (\ref{mz5}) and BSPDE (\ref{mz12}), i.e.
%\begin{eqnarray*}
%\bar{Y}_s^{t,x}=V(r,\bar{X}_r^{t,x}),\ \ \ \bar{Z}_s^{t,x}=\Psi(s,\bar{X}_s^{t,x})+\sigma^*V_x(r,\bar{X}_r^{t,x}).
%\end{eqnarray*}
Then by the application of It\^{o}'s formula to $-V_x(s,\bar{X}_s^{t,x})k_s$, it turns out, from (\ref{mz16}), that
\begin{eqnarray*}\label{mz15}
&&-V_x(s,\bar{X}_s^{t,x})k_s\nonumber\\
&=&-V_x(T,\bar{X}_T^{t,x})k_T+\int_s^Tk_rdV_x(r,\bar{X}_r^{t,x})
+\int_s^TV_x(r,\bar{X}_r^{t,x})dk_r+\int_s^TdV_x(r,\bar{X}_r^{t,x})\cdot dk_r\nonumber\\
&=&-V_x(T,\bar{X}_T^{t,x})k_T+\int_s^T\bigg[-\Gamma_x(r,\bar{X}_r^{t,x})+{1\over2}tr\big((\sigma\sigma^*)(r,\bar{X}_r^{t,x},\bar{u}_r)V_{xxx}(r,\bar{X}_r^{t,x})\big)\nonumber\\
&&\ \ \ \ \ \ \ \ \ \ \ \ \ \ \ \ \ \ \ \ \ \ \ \ \ \ \ \ \ \ \ \ +V_{xx}(r,\bar{X}_r^{t,x})b(r,\bar{X}_r^{t,x},\bar{u}_r)+\Psi_{xx}(r,\bar{X}_r^{t,x})\sigma(r,\bar{X}_r^{t,x},\bar{u}_r)\bigg]k_rdr\nonumber\\
&&+\int_s^T\bigg[\Psi_x(r,\bar{X}_r^{t,x})+V_{xx}(r,\bar{X}_r^{t,x})\sigma(r,\bar{X}_r^{t,x},\bar{u}_r)\bigg]k_rdW_r\nonumber\\
&&+\int_s^TV_x(r,\bar{X}_r^{t,x})f_{y}(r,\bar{X}_r^{t,x},\bar{Y}_r^{t,x},\bar{Z}_r^{t,x},\bar{u}_r)k_rdr+\int_s^TV_x(r,\bar{X}_r^{t,x})f_{z}(r,\bar{X}_r^{t,x},\bar{Y}_r^{t,x},\bar{Z}_r^{t,x},\bar{u}_r)k_rdW_r\nonumber\\
&&+\int_s^T\bigg[\Psi_x(r,\bar{X}_r^{t,x})+V_{xx}(r,\bar{X}_r^{t,x})\sigma(r,\bar{X}_r^{t,x},\bar{u}_r)\bigg]f_{z}(r,\bar{X}_r^{t,x},\bar{Y}_r^{t,x},\bar{Z}_r^{t,x},\bar{u}_r)k_r dr\nonumber\\
&=&-V_x(T,\bar{X}_T^{t,x})k_T\nonumber\\
&&+\int_s^T\bigg[-(\sigma_x)^*(r,\bar{X}_r^{t,x},\bar{u}_r)\big(V_{xx}(r,\bar{x}_r)\sigma(r,\bar{X}_r^{t,x},\bar{u}_r)\big)-b^*_x(r,\bar{X}_r^{t,x},\bar{u}_r)V_x(r,\bar{x}_r)\nonumber\\
&&\ \ \ \ \ \ \ \ \ \ \ -\sigma^*_x(r,\bar{X}_r^{t,x},\bar{u}_r)\Psi_x(r,\bar{x}_r)-f_x\big(r,\bar{X}_r^{t,x},V(r,\bar{X}_r^{t,x}),\Psi(r,\bar{X}_r^{t,x})+\sigma^*V_x(r,\bar{X}_r^{t,x}),\bar{u}_r\big)\nonumber\\
&&\ \ \ \ \ \ \ \ \ \ \ -f_y\big(r,\bar{X}_r^{t,x},V(r,\bar{X}_r^{t,x}),\Psi(r,\bar{X}_r^{t,x})+\sigma^*V_x(r,\bar{X}_r^{t,x}),\bar{u}_r\big)V_x(r,\bar{X}_r^{t,x})\nonumber\\
&&\ \ \ \ \ \ \ \ \ \ \ -f_z\big(r,\bar{X}_r^{t,x},V(r,\bar{X}_r^{t,x}),\Psi(r,\bar{X}_r^{t,x})+\sigma^*V_x(r,\bar{X}_r^{t,x}),\bar{u}_r\big)\Psi_{x}(r,\bar{X}_r^{t,x})\nonumber\\
&&\ \ \ \ \ \ \ \ \ \ \ -f_z\big(r,\bar{X}_r^{t,x},V(r,\bar{X}_r^{t,x}),\Psi(r,\bar{X}_r^{t,x})+\sigma^*V_x(r,\bar{X}_r^{t,x}),\bar{u}_r\big)\sigma^*_x(r,\bar{X}_r^{t,x},\bar{u}_r)V_{x}(r,\bar{X}_r^{t,x})\nonumber\\
&&\ \ \ \ \ \ \ \ \ \ \ -f_z\big(r,\bar{X}_r^{t,x},V(r,\bar{X}_r^{t,x}),\Psi(s,\bar{X}_r^{t,x})+\sigma^*V_x(r,\bar{X}_r^{t,x}),\bar{u}_r\big)\sigma^*(r,\bar{X}_r^{t,x},\bar{u}_r)V_{xx}(r,\bar{X}_r^{t,x})\bigg]k_rdr\nonumber\\
&&+\int_s^T\bigg[\Psi_x(r,\bar{X}_r^{t,x})+V_{xx}(r,\bar{X}_r^{t,x})\sigma(r,\bar{X}_r^{t,x},\bar{u}_r)\bigg]k_rdW_r\nonumber\\
&&+\int_s^TV_x(r,\bar{X}_r^{t,x})f_{y}(r,\bar{X}_r^{t,x},\bar{Y}_r^{t,x},\bar{Z}_r^{t,x},\bar{u}_r)k_rdr+\int_s^TV_x(r,\bar{X}_r^{t,x})f_{z}(r,\bar{X}_r^{t,x},\bar{Y}_r^{t,x},\bar{Z}_r^{t,x},\bar{u}_r)k_rdW_r\nonumber\\
&&+\int_s^T\bigg[\Psi_x(r,\bar{X}_r^{t,x})+V_{xx}(r,\bar{X}_r^{t,x})\sigma(r,\bar{X}_r^{t,x},\bar{u}_r)\bigg]f_{z}(r,\bar{X}_r^{t,x},\bar{Y}_r^{t,x},\bar{Z}_r^{t,x},\bar{u}_r)k_r dr\nonumber\\
&=&-V_x(T,\bar{x}_T)k_T\nonumber\\
&&+\int_s^T\bigg[-(\sigma_x)^*(r,\bar{X}_r^{t,x},\bar{u}_r)\big(V_{xx}(r,\bar{X}_r^{t,x})\sigma(r,\bar{X}_r^{t,x},\bar{u}_r)\big)-b^*_x(r,\bar{X}_r^{t,x},\bar{u}_r)V_x(r,\bar{X}_r^{t,x})\nonumber\\
&&\ \ \ \ \ \ \ \ \ \ \ -\sigma^*_x(r,\bar{X}_r^{t,x},\bar{u}_r)\Psi_x(r,\bar{X}_r^{t,x})-f_x\big(r,\bar{X}_r^{t,x},V(r,\bar{X}_r^{t,x}),\Psi(r,\bar{X}_r^{t,x})+\sigma^*V_x(r,\bar{X}_r^{t,x}),\bar{u}_r\big)\nonumber\\
%&&\ \ \ \ \ \ \ \ \ \ \ +f_y\big(r,\bar{X}_r^{t,x},V(r,\bar{X}_r^{t,x}),\sigma^*V_x(r,\bar{X}_r^{t,x}),\bar{u}_r\big)V_x(r,\bar{X}_r^{t,x})\nonumber\\
%&&\ \ \ \ \ \ \ \ \ \ \ +f_z\big(r,\bar{X}_r^{t,x},V(r,\bar{X}_r^{t,x}),\Psi(s,\bar{X}_s^{t,x})+\sigma^*V_x(r,\bar{X}_r^{t,x}),\bar{u}_r\big)\Psi_{x}(r,\bar{X}_r^{t,x})\nonumber\\
&&\ \ \ \ \ \ \ \ \ \ \ -f_z\big(r,\bar{X}_r^{t,x},V(r,\bar{X}_r^{t,x}),\Psi(r,\bar{X}_r^{t,x})+\sigma^*V_x(r,\bar{X}_r^{t,x}),\bar{u}_r\big)\sigma^*_x(r,\bar{X}_r^{t,x},\bar{u}_r)V_{x}(r,\bar{X}_r^{t,x})%\nonumber\\
%&&\ \ \ \ \ \ \ \ \ \ \ \Psi_x(r,\bar{X}_r^{t,x})f_{z}(r,\bar{X}_r^{t,x},\bar{Y}_r^{t,x},\bar{Z}_r^{t,x},\bar{u}_r)
\bigg]k_rdr\nonumber\\
&&-\int_s^T-\bigg[\Psi_x(r,\bar{X}_r^{t,x})+V_{xx}(r,\bar{X}_r^{t,x})\sigma(r,\bar{X}_r^{t,x},\bar{u}_r)+V_x(r,\bar{X}_r^{t,x})f_{z}(r,\bar{X}_r^{t,x},\bar{Y}_r^{t,x},\bar{Z}_r^{t,x},\bar{u}_r)\bigg]k_rdW_r.
\end{eqnarray*}
Noticing $h_x(\bar{X}_T^{t,x})=V_x(T,\bar{X}_T^{t,x})$, by the uniqueness of the solution to FBSDE \eqref{eq:4.6}, we obtain (\ref{mz13}).
\begin{flushright}
	\qed
\end{flushright}
\end{proof}
\section{An Example: LQ Problem}
In this section, we take the LQ problem as an example to show the relationship between stochastic maximum principle and stochastic dynamical programming. Consider the following forward-backward stochastic system:
\[
\left\{
\begin{array}{l}
dX_s=\big[A_sX_s+B_su_s\big]ds+\big[C_sX_s+D_su_s\big]dW_s\\
X_t=x,\\
dY_s=-\big[\lambda_s Y_s+\langle
Q_sX_s,X_s\rangle +\langle R_su_s,u_s\rangle\big]ds+Z_sdW_s\\
Y_T=\langle GX_T,X_T\rangle.
\end{array}%
\right.
\]
The cost functional is defined as following:
\begin{eqnarray*}
J(t,x,u)=Y^{t,x,u}_t.
\end{eqnarray*}
We have the following assumptions for the coefficients.
\begin{ass}\label{assumption_lq}	
	\quad
\begin{itemize}

	\item[1.] The coefficients $A,B,C,D,\lambda,Q,$ and $R$ are all bounded $\{\mathcal F_t\}$-adapted processes;
	\item[2.] The coefficients $Q$ and $R$ are uniformly positive definitive, i.e., there exists a constant $C$ such that
	$$
	Q_s,R_s  \ge C I,\text{ for all $s\in[t,T]$, a.s.,}
	$$
where $I$ is the identity matrix.
\end{itemize}
\end{ass}
%\begin{problem}\label{mz22}
%For any $x\in R^{n}$, find a $\bar{u}\in\mathcal{A}_0$ such that
%\begin{eqnarray*}
%J'(t,x;\bar{u}_\cdot)=\inf_{u\in {\mathcal{A}_0}}J'(t,x;u_\cdot),
%\end{eqnarray*}
%where the admissible set defined by $$\mathcal{A}_0\triangleq\Big\{u:\Omega\times[0,T]\rightarrow U;u\ is \mathscr{F}_t-adapted\ and\ E\int_0^T|u_t|^4dt<\infty\Big\}.$$
%\end{problem}
For any admissible control $u$ and initial state $x$, we introduce the corresponding adjoint equation:
\begin{eqnarray*}
\left\{
\begin{array}{l}
dp_s=-\big[A^{\ast}_sp_s+C^{\ast}_sq_s-2k_sQ_sX_s\big]ds+q_sdW_s\\
p_T=-2k_TGX_T,\\
dk_s=\lambda_s k_sds\\
k_t=-1.%
\end{array}%
\right.
\end{eqnarray*}
From the maximum principle we proved in previous section, we shall have the following theorem.
\begin{cor}\label{mz29}
If an admissible pair $(u,X)$ is the optimal pair of LQ problem, $(u,X)$ satisfies
\begin{eqnarray}\label{mz30}
-2k_s R_s u_s +D^{\ast}_sq_s+B^{\ast}_s p_s =0,
\end{eqnarray}
where $(p,q,k)$ is the solution to the corresponding adjoint equation. Therefore, the
optimal control has the dual presentation as below:
\begin{eqnarray*}
u_s ={1\over2}k^{-1}_sR^{-1}_s\big[D^{\ast}_sq_s+B^{\ast}_sp_s\big].
\end{eqnarray*}
\end{cor}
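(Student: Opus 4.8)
The plan is to specialize the stochastic maximum principle, Theorem~\ref{thm:4.5}, to the LQ data. First I would read off the coefficients, namely $b(s,x,u)=A_sx+B_su$, $\sigma(s,x,u)=C_sx+D_su$, $f(s,x,y,z,u)=\lambda_sy+\langle Q_sx,x\rangle+\langle R_su,u\rangle$ and $h(x)=\langle Gx,x\rangle$, and verify (routinely, using Assumption~\ref{assumption_lq}) that they meet Assumptions~\ref{assum_b} and~\ref{assum_f}, so that Theorem~\ref{thm:4.5} is applicable. Substituting into $H(t,x,y,z,p,q,k,u)=\langle p,b\rangle+\langle q,\sigma\rangle-kf$ and differentiating in $u$, using the symmetry of $R_s$, gives
\[
H_u=B^{\ast}_sp+D^{\ast}_sq-2kR_su.
\]
As a consistency check I would also compute $H_x,H_y,H_z$ and $h_x$; because $f$ does not depend on $z$ one has $H_z=0$, so $k$ solves $dk_s=\lambda_sk_s\,ds$ with $k_t=-1$, and the resulting forward-backward system reproduces exactly the adjoint equation stated just before the corollary.

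Next I would invoke the variational inequality of Theorem~\ref{thm:4.5}, which reads $H_u(s,\cdots,u_s)(u-u_s)\ge0$ for every $u\in U$. Since the control region is unconstrained, $U=\mathbb{R}^k$, the optimal $u_s$ is an interior point and admissible variations point in every direction; hence the inequality upgrades to the first-order equality $H_u(s,\cdots,u_s)=0$, that is
\[
-2k_sR_su_s+D^{\ast}_sq_s+B^{\ast}_sp_s=0,
\]
which is precisely \eqref{mz30}.

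Finally I would solve \eqref{mz30} for $u_s$. The matrix $R_s$ is invertible since it is uniformly positive definite by Assumption~\ref{assumption_lq}, and the scalar $k_s=-\exp\!\big(\int_t^s\lambda_r\,dr\big)$ is never zero because $\lambda$ is bounded; multiplying \eqref{mz30} on the left by $\tfrac12k_s^{-1}R_s^{-1}$ yields the dual representation $u_s=\tfrac12k_s^{-1}R_s^{-1}\big[D^{\ast}_sq_s+B^{\ast}_sp_s\big]$. The only step that is not pure substitution is the passage from the variational inequality to the equality $H_u=0$, which hinges on $u_s$ being an interior point of the control region; this is automatic here because $U=\mathbb{R}^k$, so I anticipate no real obstacle.
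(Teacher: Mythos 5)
Your proposal is correct and follows exactly the route the paper intends: the paper gives no explicit proof of Corollary \ref{mz29}, simply asserting it "from the maximum principle we proved in previous section," and your specialization of Theorem \ref{thm:4.5} to the LQ coefficients (computing $H_u=B^{\ast}_sp+D^{\ast}_sq-2kR_su$, checking that the adjoint system matches, and upgrading the variational inequality to $H_u=0$ via the unconstrained control region, then inverting $R_s$ and $k_s$) is precisely that argument spelled out. The one point worth flagging explicitly in a write-up is the implicit assumption $U=\mathbb{R}^k$ in the LQ section, which you correctly identify as the hinge for passing from the inequality to the equality.
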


If we give an explicit presentation to $(p,q,k)$, a further expression of optimal control can be demonstrated. For this,  combining the adjoint system with the original controlled system, we have the following stochastic Hamilton system:
\begin{eqnarray*}
\left\{
\begin{array}{l}
dX_s =\big[A_sX_s+B_su_s\big]ds+\big[C_sX_s+D_su_s\big]dW_s\\
X_0=x,\\
dY_s =-\big[\lambda Y_s+\langle Q_sX_s,X_s\rangle+\langle R_su_s,u_s\rangle\big]ds+Z_sdW_s\\
Y_T =\langle GX_T,X_T\rangle,\\
dp_s =-\big[A^{\ast}_sp_s+C^{\ast}_sq_s-2k_sQ_sX_s\big]ds+q_sdW_s\\
p_T =-2k_TGX_T,\\
dk_s =\lambda_s k_sds\\
k_0=-1,\\
-2k_sR_s u_s +D^{\ast}_sq_s +B^{\ast}_s p_s =0.
\end{array}
\right.
\end{eqnarray*}
In summary, the stochastic Hamilton system completely characterizes the
optimal control in LQ problem. Therefore, solving LQ problem is equivalent to solving the stochstic
Hamilton system. But this Hamilton system consists of coupled FBSDEs. Thus, this characterization is far from satisfactory. We then introduce the  Riccati
equation to give the state feedback representation of the optimal control and further discussion of stochastic Hamilton system.

Different from the Markovian case, the Riccati equation here is a BSDE due to the non-Markovian coefficients:
\begin{eqnarray}\label{mz24}
\left\{
\begin{array}{l}
dP_s=-\{A_s^{\ast}P_s+P_sA_s+C^{\ast }_sP_s C_s+\lambda_s P_s+C^{\ast }_s L_s +L_s C_s+Q_s\\
\ \ \ \ \ \ \ \ \ \ \ \ -\big[P_sB_s+C^{\ast }_s P_s D_s +L_sD_s\big]\\
\ \ \ \ \ \ \ \ \ \ \ \ \ \ \times\big[R_s +D^{\ast}_sP_s D_s\big]^{-1}\big[P_sB_s +C^{\ast}_sP_sD_s+L_sD_s\big]^*\}ds+L_s dW_s\\
P_T=G.
\end{array}
\right.
\end{eqnarray}
The solvability of \eqref{mz24} had been studied by Tang \cite{tang2015dynamic}.
\begin{thm} Under Assumption \ref{assumption_lq}, the stochastic Riccati equation (\ref{mz24}) has a
unique solution $(P,L)$, where $P$ is a uniformly bounded and nonnegative matrix-valued process and $L$ satisfies
\begin{eqnarray*}
E\left( \int_{0}^{T}\left\vert L_{s}\right\vert ^{2}ds\right) ^{p}<\infty,
\end{eqnarray*}
for any $p>1$.
\end{thm}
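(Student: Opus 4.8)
The plan is to regard \eqref{mz24} as a backward stochastic Riccati equation and to prove solvability by the quasi-linearization (Bellman) method of Tang \cite{tang2015dynamic}, exploiting that $P$ is, up to the discount factor $\lambda$, the coefficient of the quadratic value function of the LQ problem of this section. The essential difficulty is that the generator grows \emph{quadratically} in the martingale integrand $L$, through the term $[P_sB_s+C^{*}_sP_sD_s+L_sD_s][R_s+D^{*}_sP_sD_s]^{-1}[\cdot]^{*}$, so the Lipschitz theory of Lemma \ref{lp_BSDE} cannot be applied directly.

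First I would linearize the quadratic term. Writing $M_s=R_s+D^{*}_sP_sD_s$ and $v_s=P_sB_s+C^{*}_sP_sD_s+L_sD_s$, and using the elementary identity $-vM^{-1}v^{*}=\inf_{K}\{KMK^{*}+Kv^{*}+vK^{*}\}$, valid for every symmetric $M>0$ with minimizer $K=-vM^{-1}$, the drift of \eqref{mz24} becomes an infimum over feedback processes $K$ of expressions that are \emph{affine} in $(P,L)$. For each bounded $K$ this yields a linear matrix-valued BSDE with bounded coefficients; its unique solution $(P^{K},L^{K})$ exists by standard linear BSDE theory and admits a representation as the cost functional of the LQ problem under the linear feedback $u=Kx$.

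Next I would establish the a priori bounds that make the nonlinear problem well posed. Nonnegativity $P^{K}\ge0$ follows from this representation, since by Assumption \ref{assumption_lq} we have $Q_s,R_s\ge CI$ and $G\ge0$, which force a nonnegative running and terminal cost; a uniform upper bound $P^{K}\le\Lambda I$, independent of $K$, follows from the boundedness of all coefficients via a Gronwall estimate on the linear equations (comparing, say, with a fixed admissible feedback). Setting $P:=\inf_{K}P^{K}$, these two-sided bounds are inherited, so $P$ is uniformly bounded and nonnegative, and consequently $M_s=R_s+D^{*}_sP_sD_s\ge CI$ is uniformly invertible, which is exactly what is needed to give meaning to the inverse in \eqref{mz24}. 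I would then verify that the pair $(P,L)$, with $L$ read off as the martingale integrand of $P$, actually solves \eqref{mz24} and that the infimum is attained at the optimal feedback $K=-vM^{-1}$; this is the step turning the family of linear equations back into the single quadratic one. Uniqueness follows from the characterization of $P$ as the value function, or alternatively from a comparison argument for Riccati equations.

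The main obstacle, and the most delicate point, is the stated integrability $\mathbb E(\int_0^T|L_s|^2ds)^{p}<\infty$ for all $p>1$. I would obtain it by applying It\^o's formula to $\mathrm{tr}(P_s^2)$ and rearranging to isolate $\int_s^T|L_r|^2dr$ in terms of the bounded boundary values $\mathrm{tr}(P_T^2)-\mathrm{tr}(P_s^2)$, the drift integral, and a martingale, then taking $p$-th moments with the Burkholder--Davis--Gundy inequality. The subtlety is that the quadratic term $v_sM_s^{-1}v_s^{*}$ itself contains $|L_s|^2$, so one must absorb this contribution using the uniform invertibility of $M$ and the boundedness of $P$; it is precisely this quadratic dependence on $L$ that the quasi-linearization is designed to bypass, and controlling it to the stated order $p$ is where the analysis is genuinely hard.
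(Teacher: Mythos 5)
The paper does not actually prove this theorem: it is quoted from Tang \cite{tang2015dynamic}, and the surrounding text says only that the solvability of (\ref{mz24}) ``had been studied'' there. Your outline is therefore not an alternative to the paper's argument but a reconstruction of the cited one --- the quasilinearization $-vM^{-1}v^{*}=\inf_{K}\{KMK^{*}+Kv^{*}+vK^{*}\}$, the family of linear matrix BSDEs indexed by feedback gains, the bounds $0\le P^{K}$ and $P:=\inf_{K}P^{K}\le P^{0}\le\Lambda I$ --- and it is the correct skeleton of Tang's dynamic-programming proof. Three caveats. First, nonnegativity (and boundedness) of $P$ requires $G$ to be bounded and nonnegative, which Assumption \ref{assumption_lq} as written does not state; you implicitly add it, and it is needed. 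Second, the step you compress into ``verify that $(P,L)$ actually solves the equation'' is where the bulk of the work lies: one runs the monotone iteration $K^{n+1}=-v^{n}(M^{n})^{-1}$, shows $P^{n}\downarrow P$, and must prove convergence of the integrands $L^{n}$ in a suitable space before $L$ can be ``read off'' as a martingale integrand; an infimum of semimartingales is not automatically a semimartingale.

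Third, and most substantively: the $L^{p}$ integrability of $\int_0^T|L_s|^2\,ds$ does follow from It\^o applied to $|P_s|^2$, but only because of a sign you do not mention. Writing $dP_s=-F_s\,ds+L_s\,dW_s$ with $F$ the expression in braces in (\ref{mz24}), one gets for any stopping time $\tau$
\[
\int_{\tau}^{T}|L_{r}|^{2}\,dr=|G|^{2}-|P_{\tau}|^{2}+2\int_{\tau}^{T}\mathrm{tr}(P_{r}F_{r})\,dr-2\int_{\tau}^{T}\mathrm{tr}(P_{r}L_{r})\,dW_{r},
\]
and the only term in $\mathrm{tr}(PF)$ that is quadratic in $L$ is $-\mathrm{tr}\bigl(P\,vM^{-1}v^{*}\bigr)\le 0$, since $P\ge 0$ and $M>0$; it can simply be discarded rather than ``absorbed.'' The remaining terms are at most linear in $|L|$, so conditioning on $\mathscr{F}_{\tau}$ and using Young's inequality yields $\mathbb{E}\bigl[\int_{\tau}^{T}|L_{r}|^{2}dr\,\big|\,\mathscr{F}_{\tau}\bigr]\le C$, i.e.\ $\int L\,dW$ is a BMO martingale, and the energy inequality then gives $\mathbb{E}\bigl(\int_{0}^{T}|L_{s}|^{2}ds\bigr)^{p}<\infty$ for every $p$. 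Without this sign observation, the absorption you describe produces an estimate of the form $\int|L|^{2}\le C+C\int|L|^{2}$ with a constant that need not be less than one, and the argument does not close.
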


For the concerned LQ problem, we still define its value function as
\begin{eqnarray*}
V(t,x)\triangleq\inf\limits_{u\in \mathcal{A}}J(t,x;u_\cdot)=\inf\limits_{u\in \mathcal{A}}Y_{t}^{t,x;u}.
\end{eqnarray*}
Then, the corresponding stochastic HJB equation is
\begin{eqnarray}\label{mz25}
V(t,x)&=&\langle Gx,x\rangle+\int_{t}^{T}\inf_{u}H\big(s,x,u,V(s,x),\Psi(s,x),V_{x}(s,x),\Psi_{x}(s,x),V_{xx}(s,x)\big)ds\nonumber\\
&&-\int_{t}^{T}\Psi(s,x)dW_{s}\\
&=&\langle Gx,x\rangle+\int_{t}^{T}\inf_{u}\{\langle V_{x}(s,x),A_sx+B_su\rangle+{1\over2}tr\big((C_sx+D_su)(C_sx+D_su)^{\ast}V_{xx}(s,x)\big)\nonumber\\
&&+\langle\Psi_{x}(s,x),C_sx+D_su\rangle+\lambda_s V(s,x)+\langle Q_sX_s,X_s\rangle+\langle R_su,u\rangle\}ds\nonumber\\
&&-\int_{t}^{T}\Psi(s,x)dW_s.\nonumber
\end{eqnarray}
With the help of stochastic Riccati equation, we can obtain a solution of above stochastic HJB equation.
\begin{prop}
If $(P,L)$ is the unique solution of the stochastic
Riccati equation (\ref{mz24}), $(\langle P_sx,x\rangle,\langle L_sx,x\rangle)$ is a
classical solution of the stochastic HJB equation (\ref{mz25}).
\end{prop}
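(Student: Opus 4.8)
The plan is to verify directly that the ansatz $V(s,x)=\langle P_sx,x\rangle$ and $\Psi(s,x)=\langle L_sx,x\rangle$ solves the stochastic HJB equation (\ref{mz25}), by computing the pointwise infimum over $u$ and matching it term by term with the generator of the Riccati equation (\ref{mz24}). First I would record the spatial derivatives of the ansatz. Since $P_s$ is symmetric and nonnegative, and since the quadratic form $\langle L_sx,x\rangle$ only feels the symmetric part of $L_s$ (so that we may take $L_s$ symmetric without loss of generality, consistently with the fact that the Riccati generator preserves symmetry), we have $V_x(s,x)=2P_sx$, $V_{xx}(s,x)=2P_s$, $\Psi(s,x)=\langle L_sx,x\rangle$ and $\Psi_x(s,x)=2L_sx$. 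These are polynomial in $x$, hence $C^{\infty}$, so the smoothness requirements of Proposition \ref{prop_verfication} are immediate; the growth and integrability conditions (v) follow from the uniform boundedness of $P$ and the stated moment bound on $L$, which also guarantees that $\Psi(\cdot,x)$ lies in the right space for the stochastic integral in (\ref{mz25}) to be well defined.

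Next I would substitute these expressions into the integrand of (\ref{mz25}). Using the identity $\mathrm{tr}\big((C_sx+D_su)(C_sx+D_su)^{\ast}P_s\big)=(C_sx+D_su)^{\ast}P_s(C_sx+D_su)$, the integrand becomes a quadratic polynomial in $u$ whose $u$-quadratic coefficient is $N_s:=R_s+D_s^{\ast}P_sD_s$. By Assumption \ref{assumption_lq} (namely $R_s\ge CI$) together with $P_s\ge 0$, the matrix $N_s$ is uniformly positive definite, so the infimum over $u$ is attained at the unique linear feedback minimizer $u_s^{*}=-N_s^{-1}\mathcal S_s^{\ast}x$, where $\mathcal S_s:=P_sB_s+C_s^{\ast}P_sD_s+L_sD_s$; this $u^{*}$ is an admissible control. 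Completing the square then gives $\inf_u H=\langle(\mathcal M_s-\mathcal S_sN_s^{-1}\mathcal S_s^{\ast})x,x\rangle$, where $\mathcal M_s:=A_s^{\ast}P_s+P_sA_s+C_s^{\ast}P_sC_s+\lambda_sP_s+C_s^{\ast}L_s+L_sC_s+Q_s$ collects all the $u$-independent terms.

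Finally I would observe that $\mathcal M_s-\mathcal S_sN_s^{-1}\mathcal S_s^{\ast}$ is \emph{exactly} the expression inside the braces in the drift of (\ref{mz24}). Writing the Riccati equation in integral form, $P_s=G+\int_s^T(\mathcal M_r-\mathcal S_rN_r^{-1}\mathcal S_r^{\ast})\,dr-\int_s^T L_r\,dW_r$, and evaluating the quadratic form at $x$ reproduces (\ref{mz25}) line by line, while $P_T=G$ yields the terminal condition $V(T,x)=\langle Gx,x\rangle$. I expect the only genuine work — and the main obstacle — to be the completing-the-square bookkeeping: one must expand the trace term and the inner products with the correct symmetrizations (for instance $2x^{\ast}P_sA_sx=\langle(P_sA_s+A_s^{\ast}P_s)x,x\rangle$, and likewise for the $C^{\ast}L+LC$ and cross terms) so that the $u$-independent part lines up with $\mathcal M_s$ and the mixed terms assemble into $\mathcal S_s$ with no stray factors; once this algebra is carried out correctly the identification with the Riccati generator is automatic. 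A secondary point to check is that the hypotheses of Proposition \ref{prop_svt} (equivalently condition (v) of Proposition \ref{prop_verfication}) hold with the random coefficient $K_t$ controlled by $1+|L_t|$, which is legitimate thanks to the estimate $\mathbb E\big(\int_0^T|L_s|^2\,ds\big)^p<\infty$ for every $p>1$.
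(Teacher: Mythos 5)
Your proposal is correct and follows essentially the same route as the paper's own proof: both substitute the quadratic ansatz, compute $V_x=2P_sx$, $\Psi_x=2L_sx$, $V_{xx}=2P_s$, complete the square in $u$ to evaluate the infimum, and identify the result with the drift of the Riccati equation (\ref{mz24}). Your additional remarks on the symmetry of $L_s$ and on verifying the regularity/growth conditions are sensible refinements of what the paper leaves implicit, but the core argument is the same.
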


\begin{proof}
Set
\begin{eqnarray*}
v(s,x)=\langle P_sx,x\rangle,\ \ \ \ \psi(s,x)=\langle L_sx,x\rangle.
\end{eqnarray*}
First note that
$v_{x}(s,x)=(P_s+P^*_s)x=2P_sx$, $\psi_{x}(s,x)=(L_s+L^*_s)x=2L_sx$, $v_{xx}(s,x)=P_s+P^*_s=2P_s$.
Then we have
\begin{eqnarray}\label{mz26}
&&\inf_{u}\{\langle v_{x}(s,x),A_sx+B_su\rangle+{1\over2}tr\big((C_sx+D_su)(C_sx+D_su)^{\ast}v_{xx}(s,x)\big)+\langle\psi_{x}(s,x),C_sx+D_su\rangle\nonumber\\
&&\ \ \ \ \ +\lambda_s v(s,x)+\langle Q_sx,x\rangle+\langle R_su,u\rangle\}\nonumber\\
&=&\inf_{u}\{\langle2P_sx,A_sx+B_su\rangle+{1\over2}tr\big((C_sx+D_su)(C_sx+D_su)^{\ast}2P_s\big)+\langle2L_sx,C_sx+D_su\rangle\nonumber\\
&&\ \ \ \ \ +\lambda_s\langle P_sx,x\rangle+\langle Q_sx,x\rangle+\langle R_su,u\rangle\}\nonumber\\
&=&\inf_{u}\{\langle x,P_sA_s+A_s^{\ast}P_s+Q_s+C^{\ast}_sP_sC_s+C^{\ast}_sL_s+L_sC_s+\lambda_s P_s)x\rangle\nonumber\\
&&\ \ \ \ \ +2\langle u,\big[P_sB_s+C^{\ast}_sP_sD_s+L_sD_s\big]^*x\rangle+\langle u,(R_s+D^{\ast}_sP_sD_s)u\rangle\}\\
&=&\langle\big[P_sA_s+A_s^{\ast}P_s+Q_s+C^{\ast}_sP_sC_s+C^{\ast}_sL_s+L_sC_s+\lambda_s P_s\big]x,x\rangle\nonumber\\
&&-\langle\big[P_sB_s+C^{\ast}_sP_sD_s+L_sD_s\big](R_s+D^{\ast}_sP_sD_s)^{-1}\big[P_sB_s+C^{\ast}_sP_sD_s+L_sD_s\big]^*x,x\rangle\}.\nonumber
\end{eqnarray}
Thus, noticing (\ref{mz24}), we have
\begin{eqnarray*}
&&d\langle P_sx,x\rangle\\
&=&-\{\langle\big[P_sA_s+A_s^{\ast}P_s+Q_s+C^{\ast}_sP_sC_s+C^{\ast}_sL_s+L_sC_s+\lambda_s P_s\big]x,x\rangle\\
&&\ \ \ \ -\langle\big[P_sB_s+C^{\ast}_sP_sD_s+L_sD_s\big](R_s+D^{\ast}_sP_sD_s)^{-1}\big[P_sB_s+C^{\ast}_sP_sD_s+L_sD_s\big]^*x,x\rangle\}ds\\
&&+\langle L_sx,x\rangle dW_s.
\end{eqnarray*}
By the definition for $(v,\psi)$, together with (\ref{mz26}), it turns out that
\begin{eqnarray*}
&&dv(s,x)\nonumber\\
&=&-\inf_{u}\{\langle v_{x}(s,x),A_sx+B_su\rangle+{1\over2}tr\big((C_sx+D_su)(C_sx+D_su)^{\ast}v_{xx}(s,x)\big)+\langle\psi_{x}(s,x),C_sx+D_su\rangle\nonumber\\
&&\ \ \ \ \ \ +\lambda_s v(s,x)+\langle Q_sx,x\rangle+\langle R_su,u\rangle\}ds+\psi(s,x)dW_s,
\end{eqnarray*}
which demonstrates that $(v,\psi)$ is the classical solution of the stochastic HJB equation.
\begin{flushright}
	\qed
\end{flushright}
\end{proof}
Having a classical solution of stochastic HJB equation. One can find the optimal control for the LQ problem.
\begin{prop}
	The optimal control of LQ problem is given by
	\begin{eqnarray*}
		u_s=-(R_s +D^{\ast}_sP_sD_s)^{-1}\big[P_sB_s+C^*_sP_sD_s+L_sD_s\big]^*X_s.
	\end{eqnarray*}
\end{prop}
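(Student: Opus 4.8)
The plan is to derive optimality from the Stochastic Verification Theorem (Proposition \ref{prop_svt}), using the classical solution $(v,\psi)=(\langle P_sx,x\rangle,\langle L_sx,x\rangle)$ of the stochastic HJB equation \eqref{mz25} produced in the preceding proposition. By that theorem, once $(v,\psi)$ solves \eqref{mz25} with the required regularity, it suffices to exhibit an \emph{admissible} control that realizes the pointwise infimum inside the Hamiltonian along its own state trajectory; any such control is then optimal and $V(t,x)=\langle P_tx,x\rangle$.

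First I would read off the minimizer from \eqref{mz26}. For fixed $(s,x)$ the quantity to be minimized is the quadratic
\[
u\longmapsto\langle u,(R_s+D^{\ast}_sP_sD_s)u\rangle+2\langle u,[P_sB_s+C^{\ast}_sP_sD_s+L_sD_s]^{\ast}x\rangle+(\text{terms free of }u).
\]
Since $R_s\ge CI$ and $P_s\ge0$ force $R_s+D^{\ast}_sP_sD_s$ to be uniformly positive definite, this map is strictly convex, and its unique minimizer is characterized by the first-order condition
\[
(R_s+D^{\ast}_sP_sD_s)u+[P_sB_s+C^{\ast}_sP_sD_s+L_sD_s]^{\ast}x=0,
\]
giving $u^{*}(s,x)=-(R_s+D^{\ast}_sP_sD_s)^{-1}[P_sB_s+C^{\ast}_sP_sD_s+L_sD_s]^{\ast}x$. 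This is exactly the asserted feedback, with $x$ taken to be the current state.

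Next I would close the loop. Writing $K_s:=-(R_s+D^{\ast}_sP_sD_s)^{-1}[P_sB_s+C^{\ast}_sP_sD_s+L_sD_s]^{\ast}$ and substituting $u_s=K_sX_s$ into the forward equation yields the linear SDE
\[
dX_s=(A_s+B_sK_s)X_s\,ds+(C_s+D_sK_s)X_s\,dW_s,\qquad X_t=x.
\]
I would establish that this equation has a unique strong solution $X$ and that $u_s=K_sX_s$ belongs to the admissible class $\mathcal A$. Granting this, $u_s=u^{*}(s,X_s)$ attains the infimum in \eqref{mz26} for a.e.\ $s$ by construction, so Proposition \ref{prop_svt} delivers optimality.

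The main obstacle is precisely this well-posedness and admissibility step, because $K_s$ contains the process $L_s$, which by the Riccati solvability theorem is only $L^p$-integrable, in the sense $\mathbb E(\int_0^T|L_s|^2ds)^p<\infty$ for every $p>1$, rather than bounded. Consequently the closed-loop coefficients $A_s+B_sK_s$ and $C_s+D_sK_s$ are unbounded random coefficients, and the standard Lipschitz SDE theory does not apply verbatim. I expect to overcome this by exploiting the linearity: from the bound $|K_s|\le C(1+|L_s|)$ (using boundedness of $A,B,C,D,R^{-1},P$ together with positive definiteness of $R_s+D^{\ast}_sP_sD_s$), a Burkholder--Davis--Gundy and Gronwall argument yields $\mathbb E\sup_{s\in[t,T]}|X_s|^{2q}<\infty$ for large $q$, and then H\"older's inequality combined with $\int_t^T|u_s|^2ds\le C\sup_s|X_s|^2\int_t^T(1+|L_s|^2)ds$ gives $\mathbb E\int_t^T|u_s|^2ds<\infty$, securing admissibility and completing the verification.
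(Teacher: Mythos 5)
Your overall route coincides with the paper's: invoke the stochastic verification theorem (Proposition \ref{prop_svt}) with the classical solution $(\langle P_sx,x\rangle,\langle L_sx,x\rangle)$ of \eqref{mz25}, read off the pointwise minimizer of the quadratic in $u$ from \eqref{mz26} (this part is correct, since $R_s+D_s^{\ast}P_sD_s\ge R_s\ge CI$ makes the map strictly convex), and then check that the resulting feedback control is admissible. The paper disposes of that last step in one line by citing Tang \cite{tang2015dynamic}; you correctly identify it as the real obstacle, but the argument you sketch for it does not work. With $K_s=-(R_s+D_s^{\ast}P_sD_s)^{-1}[P_sB_s+C_s^{\ast}P_sD_s+L_sD_s]^{\ast}$, the closed-loop diffusion coefficient $(C_s+D_sK_s)X_s$ carries a random, unbounded factor of order $1+|L_s|$, and $L$ is only known to satisfy $\mathbb E\bigl(\int_0^T|L_s|^2ds\bigr)^p<\infty$ for all $p>1$. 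A Burkholder--Davis--Gundy plus Gronwall argument cannot close here: it produces an estimate of the form $\mathbb E\sup_{r\le s}|X_r|^2\le C|x|^2+C\,\mathbb E\int_t^s(1+|L_r|^2)|X_r|^2\,dr$, and Gronwall needs the factor multiplying $|X_r|^2$ to be deterministic or bounded; H\"older only pushes the difficulty to higher moments. Equivalently, in the scalar analogue the explicit solution gives $|X_T|^2$ as $\exp\bigl(\int_t^T|b_s|^2ds\bigr)$ times a stochastic exponential, so finiteness of $\mathbb E|X_T|^2$ requires \emph{exponential} integrability of $\int_0^T|L_s|^2ds$, which is strictly stronger than the polynomial moments supplied by the Riccati solvability theorem.

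The admissibility therefore has to come from the structure of the problem rather than from generic SDE moment estimates: one localizes with stopping times, applies It\^o's formula to $\langle P_sX_s,X_s\rangle$ along the stopped closed-loop state, and exploits the Riccati equation together with $P\ge0$ and $R_s+D_s^{\ast}P_sD_s\ge CI$ to obtain the a priori bound $\mathbb E\int_t^T|u_s|^2ds\le C\langle P_tx,x\rangle$ before removing the localization; this completion-of-squares argument is what is carried out in Tang \cite{tang2015dynamic}, which is precisely the reference the paper's proof leans on. As written, your proposal has a genuine gap at this step.
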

\begin{proof}
	By Proposition \ref{prop_svt}, we see that the candidate $u$ for the optimal control is of the following feedback form:
	\begin{eqnarray*}
		u_s=-(R_s +D^{\ast}_sP_sD_s)^{-1}\big[P_sB_s+C^*_sP_sD_s+L_sD_s\big]^*X_s.
	\end{eqnarray*}
To show that it is indeed the optimal control, one only need to prove that it is a admissible control, which is proved in Tang \cite{tang2015dynamic}.
\begin{flushright}
	\qed
\end{flushright}
\end{proof}
Finally, applying It\^{o} formula to $dP_sX_sk_s$, we immediately have the desired relationship for LQ problem.
\begin{thm}\label{mz28}
For LQ Problem, we have the relationship between stochastic maximum principle and stochastic dynamical programming below:
\begin{eqnarray*}
p_s&=&-2P_sX_sk_s,\\
q_s&=&-2\big[P_s(C_sX_s+D_su_s)+L_sX_s\big]k_{s}.
\end{eqnarray*}
\end{thm}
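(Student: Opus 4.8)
The plan is to guess the explicit form of the forward adjoint pair $(p,q)$ from the verification theorem and confirm it by a uniqueness argument. Since the stochastic HJB solution is $V(s,x)=\langle P_sx,x\rangle$ and $\Psi(s,x)=\langle L_sx,x\rangle$, I would set the candidate $\tilde p_s:=-2P_sX_sk_s$ and, reading off the diffusion coefficient that It\^o's formula produces, the candidate $\tilde q_s:=-2[P_s(C_sX_s+D_su_s)+L_sX_s]k_s$. The goal is then to show that $(\tilde p,\tilde q)$ solves exactly the same adjoint dynamics $dp_s=-[A_s^{\ast}p_s+C_s^{\ast}q_s-2k_sQ_sX_s]\,ds+q_s\,dW_s$ with $p_T=-2k_TGX_T$ as $(p,q)$ does. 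As this is a linear BSDE with bounded coefficients (the process $k$ being decoupled and given explicitly by $k_s=-\exp(\int_t^s\lambda_r\,dr)$, in particular $k_s\neq0$), its solution is unique, so the two pairs must coincide.

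The computational core is to apply the It\^o product rule to $P_sX_sk_s$. Because $k$ is of finite variation ($dk_s=\lambda_sk_s\,ds$, no martingale part), every bracket term involving $k$ vanishes and $d(P_sX_sk_s)=k_s\,d(P_sX_s)+\lambda_sP_sX_sk_s\,ds$. For $d(P_sX_s)$ I would combine the Riccati equation \eqref{mz24}, written as $dP_s=-\mathcal{R}_s\,ds+L_s\,dW_s$ with $\mathcal{R}_s$ its drift, with the state dynamics, picking up the cross-variation $d[P,X]_s=L_s(C_sX_s+D_su_s)\,ds$. The martingale part of $P_sX_s$ is then $[L_sX_s+P_s(C_sX_s+D_su_s)]\,dW_s$, which multiplied by $-2k_s$ is precisely $\tilde q_s$; this confirms the announced formula for $q_s$. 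The terminal value is immediate, $\tilde p_T=-2P_TX_Tk_T=-2Gk_TX_T=p_T$.

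The main obstacle is matching the drifts. After dividing out the common nonzero factor $-2k_s$, the requirement that $\tilde p$ carry the adjoint drift $-A_s^{\ast}\tilde p_s-C_s^{\ast}\tilde q_s+2k_sQ_sX_s$ reduces to the pointwise identity
\begin{equation*}
\mathcal{R}_sX_s=(A_s^{\ast}P_s+P_sA_s+C_s^{\ast}P_sC_s+\lambda_sP_s+C_s^{\ast}L_s+L_sC_s+Q_s)X_s+\Xi_su_s,
\end{equation*}
where $\Xi_s:=P_sB_s+C_s^{\ast}P_sD_s+L_sD_s$ collects every coefficient of $u_s$. Here the optimal feedback $u_s=-M_s^{-1}\Xi_s^{\ast}X_s$ with $M_s:=R_s+D_s^{\ast}P_sD_s$ enters decisively: it turns $\Xi_su_s$ into $-\Xi_sM_s^{-1}\Xi_s^{\ast}X_s$, which is exactly the negative of the quadratic term in the Riccati drift $\mathcal{R}_s$, while the remaining linear terms of $\mathcal{R}_s$ cancel against the first group on the right. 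Thus the identity is just the Riccati equation rearranged, and it holds precisely because $P$ solves \eqref{mz24} and $u$ is the optimal control; this is where all the structure of the problem is consumed.

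With drift, diffusion, and terminal condition all matched, $(\tilde p,\tilde q)$ and $(p,q)$ solve the same linear adjoint BSDE, and uniqueness yields $p_s=-2P_sX_sk_s$ together with $q_s=-2[P_s(C_sX_s+D_su_s)+L_sX_s]k_s$, as claimed. A minor routine point I would also check is that $\tilde p,\tilde q$ lie in the admissible solution spaces, which follows from boundedness of $P$ and $k$, the integrability $L\in M^p$ from the Riccati theorem, and $X\in S^p$ for every $p>1$.
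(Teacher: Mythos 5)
Your proposal is correct and follows exactly the route the paper takes: the paper's proof consists of the single remark ``applying It\^o formula to $dP_sX_sk_s$, we immediately have the desired relationship,'' and you carry out precisely that computation, supplying the details the paper omits (the vanishing of the brackets with the finite-variation factor $k$, the drift identification via the Riccati equation combined with the optimal feedback $u_s=-M_s^{-1}\Xi_s^{\ast}X_s$, and the concluding uniqueness argument for the linear adjoint BSDE). Nothing is missing; your write-up is a faithful, more detailed version of the paper's argument.
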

%\begin{proof}
%It can be deduced immediately by an application of It\^{o} formula to $dP_sX_sk_s$.
%\begin{flushright}
%	\qed
%\end{flushright}
%\end{proof}
%\begin{proof}
%Since \
%\begin{eqnarray*}
%v(s,x)&=&\langle P_s x,x\rangle , \\
%\psi(s,x)&=&\langle L_sx,x\rangle ,
%\end{eqnarray*}
%so  from Theorem \ref{mz27}, we can directly  complete the proof.
%\end{proof}

%By Theorem \ref{mz28} we can see that the relationship between stochastic maximum principle and stochastic dynamical programming is identical with Theorem \ref{mz27}.

%\begin{cor}
%The optimal control of LQ problem is given by
%\begin{eqnarray*}
%u_s=-(R_s +D^{\ast}_sP_sD_s)^{-1}\big[P_s(B_s+\lambda D_s)+C^*_sP_sD_s+L_sD_s\big]^*X_s
%\end{eqnarray*}
%\end{cor}
%\begin{proof}
%By (\ref{mz30}) and Theorem \ref{mz28}, we have
%\begin{eqnarray*}
%-k_s R_s u_s +D^{\ast}_sq_s +B^{\ast}_s p_s =0,
%\end{eqnarray*}
%and
%\begin{eqnarray*}
%p_s&=&P_s X_s k_{s},\\
%q_s&=&\big[P_s(C_sX_s +D_s u_s) +\lambda P_s X_s +L_s X_s\big]k_{s},
%\end{eqnarray*}

\bibliographystyle{plain}

\end{document}